\documentclass{siamltex1213}
\usepackage{amsmath,amsfonts,amssymb} 
\usepackage{algorithm,algpseudocode}
\usepackage{xstring,blkarray}
\usepackage[final]{changes}
\usepackage[section]{placeins}
\algblockdefx[mfunction]{MFunction}{EndMFunction}[3]{\textbf{function} #1$=$\texttt{#2}(#3)}{\textbf{end function}}
\algblockdefx[arguments]{Arguments}{EndArguments}{\textbf{argument definitions}}{\textbf{end definitions}}
\algblockdefx[mdo]{MDo}{EndMDo}[1]{\textbf{do} #1}{\textbf{end do}}
\algblockdefx[mdo]{}{MWhile}[1]{\textbf{while} #1}

\newcommand{\wide}[1]{\mbox{\StrSubstitute{#1}{ }{\quad}}} 

\newcommand{\norm}[1]{\lVert #1 \rVert}

\begin{document}

\title{Randomized QR with Column Pivoting\thanks{University of California, Berkeley. This research was supported in part by NSF Award CCF-1319312.}}

\author{Jed A. Duersch \and Ming Gu}

\maketitle

\begin{abstract}
The dominant contribution to communication complexity in factorizing a matrix using QR with column pivoting is due to column-norm updates that are required to process pivot decisions.
We use randomized sampling to approximate this process which dramatically reduces communication in column selection.
We also introduce a sample update formula to reduce the cost of sampling trailing matrices.
Using our column selection mechanism we observe results that are comparable in quality to those obtained from the QRCP algorithm, but with performance near unpivoted QR.
We also demonstrate strong parallel scalability on shared memory multiple core systems using an implementation in Fortran with OpenMP.

This work immediately extends to produce low-rank truncated approximations of large matrices.
We propose a truncated QR factorization with column pivoting that avoids trailing matrix updates which are used in current implementations of level-3 BLAS QR and QRCP.
Provided the truncation rank is small, avoiding trailing matrix updates reduces approximation time by nearly half.
By using these techniques and employing a variation on Stewart's QLP algorithm, we develop an approximate truncated SVD that runs nearly as fast as truncated QR.

\medskip
    
\textbf{Keywords.} QR factorization, column pivoting, random sampling, sample update,
\newline blocked algorithm, low-rank approximation, truncated SVD
\end{abstract}

\section{Introduction}

We explore a variation of QR with Column Pivoting (QRCP) using randomized sampling (RQRCP) to process blocks of pivots.
Magnitudes of trailing column norms are detected using Gaussian random compression matrices to produce smaller sample matrices.
We analyze the probability distributions of sample column norms to justify the internal updating computation used to select blocks of column pivots.
P.G. Martinsson~\cite{Martinsson2015} independently developed a very similar approach in parallel to this research.
The primary difference is our introduction of a sample update formula which reduces matrix-multiplication complexity required to process the full matrix factorization by one third of what would be required by re-sampling after processing each block of column pivots.

We also extend this method of factorization to produce truncated low-rank approximations.
We propose an implementation that avoids the trailing update computation on the full matrix.
This further reduces time spent in matrix multiplication to nearly half of what would be required by a truncated version with trailing update that also employs one of our sample update formulas.
Furthermore, the Truncated Randomized QR with Column Pivoting algorithm (TRQRCP) immediately extends to approximate the truncated SVD using a variation on Stewart's QLP algorithm~\cite{journals/siamsc/Stewart99}.

We are able to achieve matrix factorizations of similar quality to standard QRCP while retaining communication complexity of unpivoted QR.
Algorithms have been implemented and tested in Fortran with OpenMP on shared-memory 24-core systems.
Our performance experiments compare these algorithms against LAPACK subroutines linked with the Intel Math Kernel Library and verify computation time nearly as short as unpivoted QR (\texttt{dgeqrf}) and substantially shorter than QRCP (\texttt{dgeqp3}).

We also examine performance and quality of low-rank truncated approximations.
The truncated algorithm TRQRCP operates in the same time domain as our implementation of truncated QR, but it yields approximation error as small as truncated QRCP.
Similarly, the approximate truncated singular value decomposition proposed TUXV yields error nearly as small as the truncated SVD.

By using randomized sampling to process pivoting decisions with a much smaller matrix, we eliminate the leading-order term of communication complexity that causes QRCP to perform much worse than blocked QR on large matrices.
These algorithms have the potential to dramatically reduce factorization times in a wide variety of applications in numerical linear algebra.
Problems that have been too large to process with QRCP-dependent subroutines will now become feasible.
This has the potential to increase the usefulness of computational modeling in a wide variety of fields of research in science and engineering.

\section{QR with column pivoting}

The QR decomposition is one of the most well known and useful tools in numerical linear algebra.
An input matrix $A$ is expressed as the product of an orthogonal matrix $Q$ and a right-triangular factor $R$, $A = QR$.
QR is particularly stable in that the decomposition always exists regardless of the conditioning of the input matrix.
Furthermore, it has finely tuned implementations that operate at the BLAS-3 level of performance.
Unfortunately, the standard QR algorithm is not suitable for purposes requiring rank detection or low-rank approximations.
These objectives require a column permutation scheme to process more representative columns earlier in the decomposition~\cite{C1987}.

The QRCP algorithm is a standard solution that is usually adequate for such purposes with a few rare exceptions such as the Kahan matrix~\cite{golub13}.
A permutation matrix $P$ is introduced to rearrange columns into a more beneficial ordering which is then decomposed as before,
\[
	AP = QR
	\mbox{.}
\]

QRCP is an effective alternative to the much more costly Singular Value Decomposition (SVD) and it has a number of applications including least-squares approximations which are discussed in detail by Chan and Hansen~\cite{CH1992}.
Furthermore, some applications for which unpivoted QR is usually sufficient occasionally encounter badly-behaved matrices.
If QRCP can be implemented at a level of performance similar to QR, stability safeguards can be included to safely handle problematic cases without burdening performance.

\subsection{QRCP performance}

Early implementations of QR and QRCP relied on level-2 BLAS kernels and therefore gave very similar performance results until reflector blocking was employed in QR~\cite{BischofVanLoan1987}\cite{SchreiberVanLoan1989}.
Instead of updating the entire matrix with each Householder reflection, transformations are collected into blocks which are applied using a level-3 BLAS matrix multiply.
This improves performance by reducing slow communication which is communication between slow and fast levels of memory.
If each pivot decision is interleaved with a Householder reflection then all trailing matrix data residing in slow memory must pass through fast memory at least once per iteration.
This gives a leading order of slow communication complexity $b m n$ to process $b$ pivots of an $m \times n$ matrix.
If reflectors are blocked and applied all at once using matrix multiply then the complexity is reduced to $O(b m n / M^{3/2})$ where $M$ is the effective size of fast memory.

The primary obstacle to high-performance QRCP is the additional communication that is required to make column pivoting decisions.
In order to understand how our algorithms improve performance, we first review the reasons why additional communication could not be avoided with previous approaches.
An outline of level-2 BLAS QRCP is provided in Algorithm~\ref{alg:qrcp}.
In the algorithms and derivations that follow, the state of an array at the end of iteration $j$ is denoted by superscript $(j)$.
Superscript $(0)$ refers to the initial state of an array upon entry to the first iteration of the main loop.

\begin{algorithm}
\caption{QRCP with level-2 BLAS Householder reflections.}
\label{alg:qrcp}
\begin{algorithmic}[1]
\Require
\Statex $A$ is $m \times n$.
\Ensure
\Statex $Q$ is $m \times m$ orthogonal matrix.
\Statex $R$ is $m \times n$ right triangular matrix, diagonals in non-increasing magnitude order.
\Statex $P$ is $n \times n$ permutation matrix such that $AP=QR$.
\MFunction {$[Q,R,P]$}{qrcp}{$A$}
\State Compute initial column 2-norms which will become trailing column norms.
\MDo{$j=1,2,\ldots k$ where $k=\min(m,n)$}
   \State Find index $p_j$ of the column with maximum trailing 2-norm.
   \State Apply permutation $S_j$ swapping column $j$ with $p_j$.
   \State Form Householder reflection $H_j = I - y_j \tau_j y_j^T$ from new column $j$.
   \State Apply reflection $A^{(j)} = H_j A^{(j-1)} S_j$.
   \State Update trailing column norms by removing the contribution of row $j$.
\EndMDo
\State $Q = H_1 H_2 \ldots H_k$ is the product of all reflections.
\State $R = A^{(k)}$.
\State $P = S_1 S_2 \ldots S_k$ is the aggregate column permutation.
\EndMFunction
\end{algorithmic}
\end{algorithm}

QRCP can be understood as a greedy procedure intended to maximize the magnitude of the determinant of the upper left block at every iteration.
At the end of iteration $j$ we can represent the matrix $A$ as a partial factorization using the permutation $P_j = S_1 \ldots S_j$ , which is the composition of column swaps so far, and the composition of Householder reflections $Q_j = H_1 \ldots H_j$ to obtain 
\[
	A P_j = Q_j
		\left[ \begin{array}{cc}
			 R^{(j)}_{11} & R^{(j)}_{12} \\
			 0      & \hat{A}^{(j)} \\
		\end{array} \right]
	\mbox{.}
\]
On the next iteration the 2-norm of the selected column within the trailing matrix $\hat{A}^{(j)}$ will become the magnitude of the next diagonal element in $R^{(j+1)}_{11}$.
The new determinant magnitude is $| \det R^{(j+1)}_{11} | = | \det R^{(j)}_{11} | \norm{\hat{A}^{(j)}(:,p_{j+1})}_2$.
This scheme selects the pivot that multiplies the previous determinant by the largest factor at each iteration.
Note that true determinant maximization would require exchanging prior columns and adjusting the factorization accordingly~\cite{GE1996}.

In order to produce a correct pivot decision on iteration $(j+1)$, trailing column norms must be updated to remove the contribution of row $j$ which depends on the Householder transformation $H_j = I - y_j \tau_j y_j^T$ where $y_j$ denotes the reflection vector and $\tau_j = 2 / y_j^T y_j$ is the corresponding reflection coefficient.
The full update requires two level-2 BLAS operations on the trailing matrix per iteration.
The first  operation computes scaled inner products $w_j^T = \tau_j y_j^T A^{(j-1)} S_j $ and the second operation modifies the trailing matrix with the rank 1 update $A^{(j)} = A^{(j-1)} S_j - y_j w_j^T$.
These operations are the performance bottleneck of QRCP.

\subsection{Attempts to achieve BLAS level-3 performance}

Quintana-Ort\'{\i} \textit{et al.\@} were able to halve level-2 BLAS operations with the insight that the trailing norm update does not require forming the full rank 1 update on each iteration~\cite{Quintana-Orti:1998:BVQ:300151.300158}.
Instead reflections can be gathered into blocks as is done for QR.
This is shown in Algorithm~\ref{alg:qrcp3}.

At the end of iteration $j$, the algorithm will have collected a block of reflectors thus far $Y_j$.
Reflector $y_i$ for $i \leq j$ appears in column $i$.
This forms a block reflection $Q_j = I - Y_j T_j Y_j^T$ where $T_j$ is the upper triangular $j \times j$ connection matrix that can be solved from $Y_j$ to give orthogonal $Q_j$.
The algorithm must also collect each corresponding scaled inner product $w_i^T$ which appears in row $i$ of $W_j^T$.
Effectively, $W_j^T = T_j^T Y_j^T A P_j$.
This provides enough information to update row $j$ alone and adjust trailing column norms to prepare for the next pivot selection.

\[
    A^{(j)}(j, :) = A^{(j-1)}(j, :) S_j - Y_j(j,:) W_j^T
\]

Note however that this construction will complicate reflector formation.
As before, the next pivot index $p_{j+1}$ is selected and swapped into column $j+1$.
Call this new column $a_{j+1}$.
From row $j+1$ down, elements of $a_{j+1}$ have not been updated with the current block of reflectors.
Before the new reflector $y_{j+1}$ can be formed, prior transformations must be applied to these rows using $\hat{a}_{j+1} = a_{j+1} - Y_j W_j^T(:, p_{j+1})$.
An additional step is also required to form reflector inner products in order to account for reflections that have not been applied to the trailing matrix.
The adjusted formula for these inner products is
\[
    w_{j+1}^T = \tau_{j+1} \left( y_{j+1}^T A^{(j)} - (y_{j+1}^T Y_j) W_j^T \right) S_{j+1}.
\]
Finally, the reflector and inner product blocks can be updated:
\[
    Y_{j+1} =
    \begin{bmatrix}
        Y_j & y_{j+1} \\
    \end{bmatrix}
    \quad\mbox{and}\quad
    W_{j+1}^T = 
    \begin{bmatrix}
        W_j^T S_{j+1} \\
        w_{j+1}^T \\
    \end{bmatrix}
    \mbox{.}
\]

Unfortunately, the remaining level-2 BLAS operations $y_{j+1}^T A^{(j)}$ and $y_{j+1}^T Y_j$ in the inner product computation still dominate slow communication complexity for large matrices.
The entire trailing matrix must still pass from slow to fast memory once per iteration.
This is why even heavily optimized implementations of blocked QRCP still run substantially slower than blocked $QR$ on both sequential and parallel architectures.

\begin{algorithm}
\caption{QRCP with level-3 BLAS reflection blocking.}
\label{alg:qrcp3}
\begin{algorithmic}[1]
\Require
\Statex $A$ is $m \times n$.
\Ensure
\Statex $Q$ is $m \times m$ orthogonal matrix.
\Statex $R$ is $m \times n$ right triangular matrix, diagonals in non-increasing magnitude order.
\Statex $P$ is $n \times n$ permutation matrix such that $AP=QR$.
\MFunction {$[Q,R,P]$}{qrcp}{$A$}
\State Compute initial column 2-norms which will become trailing column norms.
\MDo{$i=0,b,2b \ldots $ where $b$ is block size.}
\MDo{$j=i+1,i+2,\ldots \min(i+b,k)$ where $k=\min(m,n)$}
   \State Find index $p_j$ of the column with maximum trailing 2-norm.
   \State Apply permutation $S_j$ swapping column $j$ with $p_j$.
   \State \textbf{Update column $j$ with prior reflections in this block.}
   \State Form reflector $y_j$ and $\tau_j$ from new column $j$.
   \State \textbf{Compute adjusted reflector inner products $w_j^T$.}
   \State \textbf{Update row $j$ with all reflections in this block.}
   \State Update trailing column norms by removing the contribution of row $j$.
\EndMDo
\State \textbf{Apply block reflection to trailing matrix.}
\EndMDo
\State $Q = I - Y_k T_k Y_k^T$ where $T_k$ can be recovered from $Y_k$ and $\tau_1, \ldots, \tau_k$.
\State $R = A^{(k)}$.
\State $P = S_1 S_2 \ldots S_k$ is the aggregate column permutation.
\EndMFunction
\end{algorithmic}
\end{algorithm}

\subsection{Communication Avoiding Rank-Revealing QR}

Several mechanisms have been put forward to avoid repeating full passes over the trailing matrix on each iteration.
Bischof proposed pivoting restricted to local blocks~\cite{B1991} and Demmel \textit{et al.\@} propose a procedure called Communication Avoiding Rank-Revealing QR (CARRQR)~\cite{DGGX2013,journals/siamsc/DemmelGHL12}.
CARRQR proceeds by partitioning the trailing matrix into $\mathcal{P}$ subsets of columns that are processed independently and possibly simultaneously.
From within each column subset, $b$ candidate pivots are selected using QRCP.
Adjacent subsets of candidates are then combined into $\frac{1}{2}\mathcal{P}$ subsets of $2b$ candidates which are refined again using QRCP into $b$ new candidates each.
This procedure continues until only one subset of $b$ candidates remains.
The trailing matrix is then updated as before with blocked reflections.

We now examine several practical constraints in implementing CARRQR.
First, both the reflectors $Y$, inner products $W^T$, and leading rows of $R$ must be stored separately from the original matrix for each independently processed subset of columns.
Furthermore, one must employ a version of QRCP that avoids the trailing update.
This is because the final reflectors are unknown until the last selection stage.
Any intermediate changes to the original columns would have to be undone before the final transformations can be correctly processed.
In contrast, QRCP can be written to convert columns into reflectors in place on the strictly lower triangle portion of the matrix array.
Likewise, leading rows of $R$ can be stored on the upper triangle.

Depending on the initial column partition, CARRQR performs between 1 and 2 times as many inner products as QRCP per block iteration.
Note that as the reflector index $j$ increases the total number of inner products of the form $y_{j+1}^T y_{j+1}$, $y_{j+1}^T Y_j$, and $y_{j+1}^T A^{(j)}$ remains constant.
Therefore, if the $i$\textsuperscript{th} column subset contains $n_i$ columns, $b n_i$ inner products will be required to produce $b$ candidates.
Given $n_1 + n_2 + \cdots + n_\mathcal{P} = n$ on the first stage of refinement, summing over all column subsets gives $b n$ inner products to produce $\mathcal{P}$ sets of $b$ candidates.
QRCP requires the same complexity to produce $b$ final pivots.
Assuming that the number of candidates is at least halved for each subsequent stage of refinement in CARRQR, one can easily show that no more than $2 b n$ inner products will be computed in total.

Despite increased computational complexity, CARRQR is intended to benefit from better memory utilization and better parallel scalability.
If each column subset is thin enough to fit in fast memory then slow communication is eliminated between iterations of $j$.
The slow communication that remains is only that which is necessary to transmit pivot candidates between stages of refinement.

Unfortunately, writing and tuning CARRQR is nontrivial.
We implemented this algorithm and found that our version ran slightly slower than DGEQP3 on a shared memory parallel machine.
We believe this was principally due to inefficient parallelization of the final stages of refinement.
Our implementation assigned each column subset to a different processor which then worked independently to produce candidates.
This approach was attractive because it did not require communication between processors to complete each subset.
However, despite communication efficiency, this technique can only engage as many processors as there are column subsets.
During the final stages of refinement most processors were left idle.
A second problem with our approach occurred when the matrix was too tall.
In such cases it is not possible to select column subsets that are thin enough to fit in fast memory.
An efficient implementation would need alternative or additional workload-splitting tactics to use all processors at every stage of refinement.

As we discuss in the next section, the method we propose also gathers pivots into blocks which are then applied to the trailing matrix.
However, our method improves performance by reducing both communication and computational complexity needed to form a block of pivots.
This method is not incompatible with CARRQR.
It is possible that high-performance implementations on distributed memory machines will benefit from a hybrid selection scheme.

\section{Randomized sampling}

Randomized sampling is a computational tool that has recently gained traction in a number of applications in numerical linear algebra.
Random sampling reduces communication complexity via dimensional reduction while simultaneously maintaining high probability of safe error bounds on the approximations that follow.
This is the result of the well-known Johnson-Lindenstrauss Lemma~\cite{JL1984}.

Let $a_j$ represent the $j$\textsuperscript{th} column of $A$ for $j=1, 2, \ldots, n$.
Using a randomized $\ell \times m$ compression matrix $\Omega$ with unit-variance Gaussian Independent Identically Distributed (GIID) elements, we can construct sample columns $b_j = \Omega a_j$ which have 2-norm expectation values and variance
\[
    \mathbb{E}\left(\norm{b_j}_2^2\right) = \ell \norm{a_j}_2^2
    \quad\mbox{ and }\quad
    \mathbb{V}\left(\norm{b_j}_2^2\right) = 2 \ell \norm{a_j}_2^4
    \mbox{.}
\]
Furthermore, the probability of successfully detecting all column norms as well as all distances between columns within a relative error $\tau$ for $0 < \tau < \frac{1}{2}$ is bounded by
\[
    \texttt{Pr} \left(
        \left| \frac{\|b_j-b_i\|_2^2}{\ell \|a_j-a_i\|_2^2} - 1 \right|
        \leq \tau \right) \geq 1 - 2 e^{\frac{-\ell \tau^2}{4}(1 - \tau)}
\]
where $i=0,1, \ldots, n$, $a_0=0$, and $b_0=0$.

We use the sample matrix $B=\Omega A$ to select the column with largest approximate norm.
Subsequent columns are selected by continuing QRCP on the sample as seen in SSRQRCP, Algorithm~\ref{alg:ssrqrcp}, which we justify in Section~\ref{subsec:ssrqrcp}.
This reduces both communication and computation complexity associated with selecting a block of $b$ pivots by a factor of $\ell / m$.
More significantly, if the sample matrix $B$ fits in fast memory then slow communication between consecutive pivot decisions is eliminated within each block iteration.

Once $b$ pivots have been selected from the sample matrix $B$ the corresponding columns of $A$ are permuted and processed all at once as is done in DGEQRF.
We sacrifice knowing exact trailing norms when pivot decisions are made and settle for approximations that have extremely high probability of detecting magnitudes needed to reveal rank or construct reliable low-rank approximations.
Like DGEQRF, the remaining performance bottleneck is due to matrix multiplication needed to perform block reflections.
As such, Randomized QRCP (RQRCP) algorithms satisfy the performance standard of level-3 BLAS kernels.
Algorithm~\ref{alg:ssrqrcp} was first implemented in a term project in a course on random sampling at UC Berkeley in April 2014~\cite{Duersch2014}.
It is the simplest example of this approach.
P.G. Martinsson independently developed sample-based column pivoting in parallel work~\cite{Martinsson2015}.

SSRQRCP is acceptable for very-low-rank approximations in which the required sample size is small enough to maintain communication efficiency.
For larger approximations we will resort to a more comprehensive algorithm including a sample update formulation that subsumes this version.
However the single-sample algorithm illuminates the performance advantage gained from this approach, so we examine it first.

\begin{algorithm}
\caption{SSRQRCP - Single-Sample Randomized QRCP.}
\label{alg:ssrqrcp}
\begin{algorithmic}[1]
\Require
\Statex $A$ is $m \times n$.
\Statex $k$ the desired approximation rank.
$k \ll \min(m,n)$.
\Ensure
\Statex $Q$ is $m \times m$ orthogonal matrix in the form of $k$ reflectors.
\Statex $R$ is $k \times n$ truncated upper trapezoidal matrix.
\Statex $P$ is $n \times n$ permutation matrix such that $AP \approx Q(:,1:k)R$.
\MFunction {$[Q,R,P]$}{ssrqrcp}{$A, k$}
\State Set sample rank $l=k+p$ needed for acceptable sample error.

\State Generate random $l \times m$ GIID compression matrix $\Omega$.
\State Form the sample $B=\Omega A$.
\State \textbf{Get $k$ column pivots from sample, $[Q_b,R_b,P]=\texttt{qrcp}(B)$.}
\State \textbf{Apply permutation $A^{(1)}=A^{(0)}P$.}
\State Construct $k$ reflectors from new leading columns, $[Q,R_{11}]=\texttt{qr}(A^{(1)}(\texttt{:,1:k}))$.
\State Finish $k$ rows of $R$ in remaining columns, $R_{12}=Q(\texttt{:,1:k})^T A^{(1)}(\texttt{:,k+1:n})$.
\EndMFunction
\end{algorithmic}
\end{algorithm}

Although a GIID compression matrix $\Omega$ is more computationally expensive than some alternatives, it only contributes a small fraction to the total time.
For example, randomization comprises 7\% of processing time on a 12000-by-12000 matrix with $k=32$ and $\ell=40$ on a 24-core test machine.
Furthermore, this fractional contribution becomes even smaller in algorithms that follow.
Therefore we believe optimization of $\Omega$ is premature at this stage.
It is also a robust choice because it is both dense and invariant in distribution under independent orthogonal transformations.
These characteristics support our analysis.

\subsection{Sample norm distribution updates during SSRQRCP}
\label{subsec:ssrqrcp}

The 2-norms-squared of the columns of the sample matrix $B = \Omega A$ are proportional to the 2-norms-squared of the corresponding columns of $A$ with a constant of proportionality following the Chi-squared distribution.
Let $a$ be any particular column of $A$.
We can represent $a$ as a unit-vector $q_a$ using $a = q_a \norm{a}_2$.
Furthermore, let $Q_{\bot a}$ be an orthogonal complement of $q_a$ so that $Q_a = [q_a \enspace Q_{\bot a}]$ is an orthogonal matrix.
The $\ell \times m$ GIID compression matrix $\Omega$ can be represented in this basis as
\[
    \Omega = \left[ \begin{array}{cc}
		w_1 & W_2 \\
		\end{array} \right]
		\left[ \begin{array}{c}
			q_a^T \\
			Q_{\bot a}^T \\
		\end{array} \right]
	\quad\mbox{which gives a sample column}\quad
	b = \Omega a = w_1 \norm{a}_2
	\mbox{.}
\]
GIID matrices are invariant in distribution under independent orthogonal transformations.
If $Q_a$ is independent of $\Omega$ then we may regard both the $\ell$-element column $w_1$ and the $\ell \times (m-1)$ matrix $W_2$ as GIID matrices.
The 2-norm-squared of $w_1$ must therefore follow the Chi-squared distribution with $\ell$ degrees of freedom.
That is,
\[
	\|b\|_2^2 = x \|a\|_2^2
	\quad\mbox{where}\quad
	\rho_\ell(x) = \frac{ \left( \frac{x}{2} \right)^{\frac{\ell}{2}} e^{-\frac{x}{2}} }{x \Gamma(\frac{\ell}{2})}
	\quad\mbox{giving}\quad
	\mathbb{E}(x) = \ell
	\quad\mbox{and}\quad
	\mathbb{V}(x) = 2 \ell.
\]

If QRCP is performed on the sample matrix $B$, then at iteration $j$ we can examine $B$ as a partial factorization.
Let $P_j$ be the aggregate permutation so far.
Represent the accumulated orthogonal transformations applied to $B$ as $Q_{b,j}$ with corresponding intermediate triangular factor $S$ as shown below.
We could also construct a partial factorization of $A$ using the same pivots that were applied to $B$.
The corresponding factors of $A$ are $Q_j$ and $R$.
\[
    B P_j = Q_{b,j} \begin{bmatrix}
        S_{11}^{(j)} & S_{12}^{(j)} \\
        0            & S_{22}^{(j)} \\
    \end{bmatrix}
    \quad\mbox{ and }\quad
    A P_j = Q_j \begin{bmatrix}
        R_{11}^{(j)} & R_{12}^{(j)} \\
        0            & R_{22}^{(j)} \\
    \end{bmatrix}
    \mbox{.}
\]
Both $S_{11}^{(j)}$ and $R_{11}^{(j)}$ are upper triangular.
$\Omega$ can then be expressed as elements $W$ in the bases given by $Q_{b,j}$ and $Q_j$:
\[
    \Omega = Q_{b,j} \begin{bmatrix}
    	W_{11}^{(j)} & W_{12}^{(j)} \\
    	W_{21}^{(j)} & W_{22}^{(j)} \\
    \end{bmatrix} Q_j^T
    \mbox{.}
\]
Noting that $B P_j = \Omega A P_j$, we have
\begin{equation}
    \begin{bmatrix}
        S_{11}^{(j)} & S_{12}^{(j)} \\
        0            & S_{22}^{(j)} \\
    \end{bmatrix} =
    \begin{bmatrix}
      W_{11}^{(j)} R_{11}^{(j)} & W_{11}^{(j)} R_{12}^{(j)}+W_{12}^{(j)} R_{22}^{(j)} \\
      W_{21}^{(j)} R_{11}^{(j)} & W_{21}^{(j)} R_{12}^{(j)}+W_{22}^{(j)} R_{22}^{(j)} \\
    \end{bmatrix}\mbox{.}
    \label{eq:sample}
\end{equation}

If $S_{11}^{(j)}$ is non-singular then both $W_{11}^{(j)}$ and $R_{11}^{(j)}$ are also non-singular.
It follows that $W_{11}^{(j)} = S_{11}^{(j)} R_{11}^{(j)-1}$ is upper triangular and $W_{21}^{(j)}=0$.
In other words, we have implicitly formed a QR factorization of $\Omega Q_j$ using the same orthogonal matrix $Q_{b,j}$ that corresponds to $BP_j$.
Finally, the trailing matrix in the sample simplifies to $S_{22}^{(j)}=W_{22}^{(j)}R_{22}^{(j)}$, which is a sample of the trailing matrix $R_{22}^{(j)}$ using the compression matrix $W_{22}^{(j)}$.
We note, however, that $W_{22}^{(j)}$ is not GIID which we discuss in detail in Section~\ref{subsec:normtrunc}. 
This formulation justifies using column norms of $S_{22}^{(j)}$ to approximate column norms of $R_{22}^{(j)}$ when we select the $(j+1)$\textsuperscript{st} pivot.
A full block of pivots can be selected without interleaving any references to $A$ or $R$ memory.

\subsection{Sample bias}
The bias of an estimator is the difference between the expected value of the estimator and the true value of the quantity being estimated.
In this case, sample column norms are used to estimate true column norms.
Unfortunately, using the sample to make pivot decisions, which is indeed the entire purpose of random sampling, results in two forms of bias.
Since these biases could potentially interfere with the efficacy of the algorithms we propose, we attempt to clarify and quantify their effects.

The first form of bias is \textit{post hoc selection}.
This can be understood with a simple example.
Suppose we flip a coin a few times.
If the coin is fair, we expect half of the flips to be heads.
Now suppose this experiment is repeated several times and we select the trial that produced heads most frequently.
We now expect more than half of the flips to be heads in the selected trial even though the same coin was used.
The selected trial becomes biased because we have removed the possibility of observing a lower frequency than the alternatives.
As a consequence, the expectation value increases.

Similarly, we use the sample matrix to estimate true norms in order to permute the largest column to the front.
The selected sample column will be more likely to exhibit an unusually large norm due to the fact that it was specifically selected as the maximum.
This bias is present even if a new compression matrix is used for every column selection.
This bias is most pronounced when many columns are nearly tied for having the largest norm.
In such situations, sample noise out-weighs the true distinctions between columns.

\begin{figure}[!ht]
	\centering
	\includegraphics[width=0.80\textwidth]{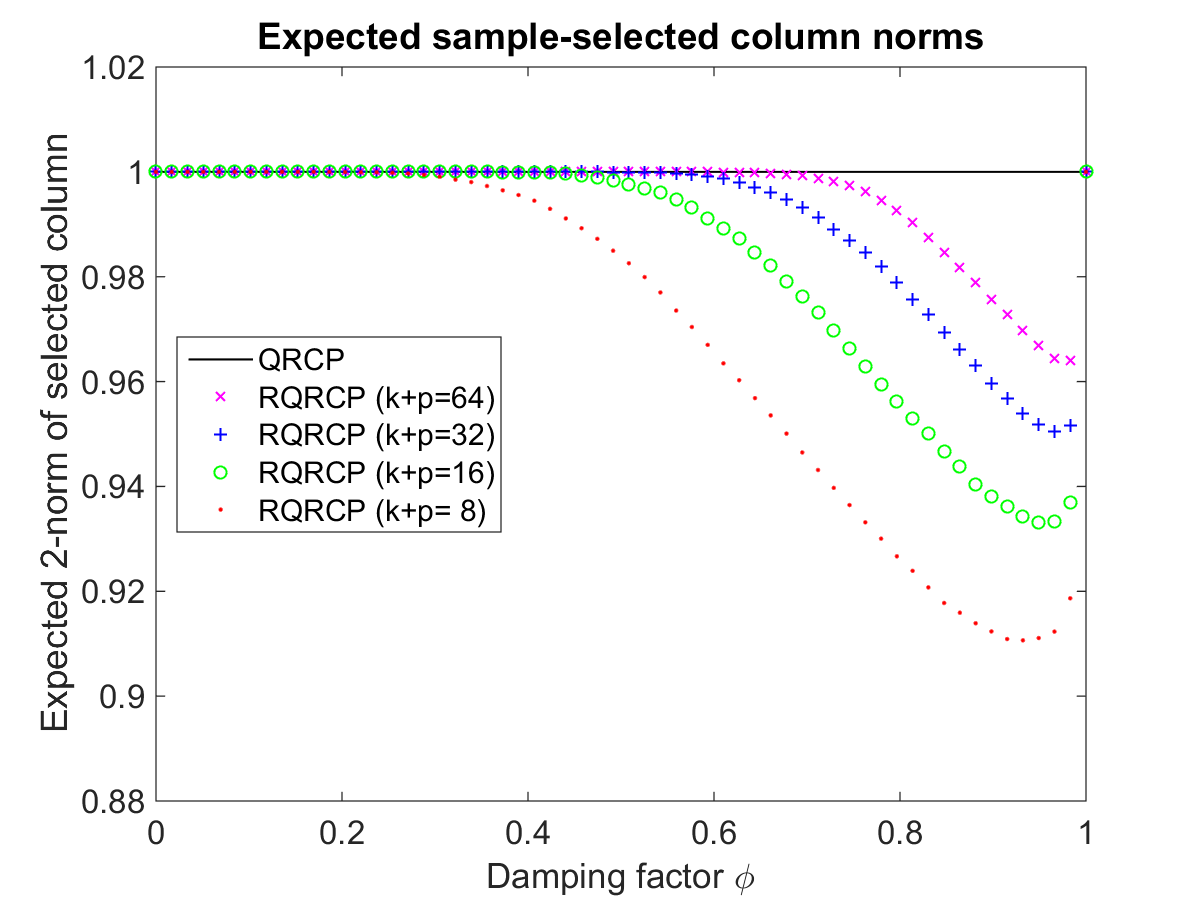}
	\caption{Each value of $\phi$ corresponds to a matrix in which column $j$ has $\norm{a_j}_2 = \phi^{j-1}$.
The damping factor $\phi$ determines how quickly the column norms descend.
We construct corresponding sample columns $b_j = \Omega a_j$ where $\Omega$ has rank $k+p$ and take $j_{\texttt{max}}$ for which $\norm{b_{j_{\texttt{max}}}}_2$ is maximal.
The expectation value $\mathbb{E}(\norm{a_{j_{\texttt{max}}}}_2)$ is plotted.
Greater expectation values indicate better selection performance.
In comparison, $QRCP$ always produces the maximum column.
This shows that when norms descend quickly (shown by smaller values of $\phi$) sample-selection produces nearly optimal norms.
Unsurprisingly, increasing the sample rank improves selections.
Also note that even the worst case expectation value we observe is over 90\% of the optimum.}
	\label{fig:selectExpect}
\end{figure}

The numerical experiment shown in Figure~\ref{fig:selectExpect} computes the expected 2-norm of a column selected by this method.
The matrix $A$ is constructed to have orthogonal columns with scaled 2-norms according to a decaying exponential so that column $j$ has 2-norm $\norm{a_j}_2 = \phi^{j-1}$ for $\phi \in [0, 1]$.
 When the damping factor $\phi$ is close to $1$ we see that sub-optimal choices are more likely, however they do little damage.
When several columns are nearly tied for first place, any of them would serve well as the next pivot.
We are more concerned with suppressing damaging selections.
For rank-revealing applications or low-rank approximations, it is much more important to avoid picking a column that is an order of magnitude smaller than alternatives.
This is the scenario in which the sample performs well.

\subsection{Norm distribution truncation}
\label{subsec:normtrunc}

The second form of bias arises after partial factorization.
If the sample had not been used to make pivot decisions then $W_{22}^{(j)}$ would be equivalent in distribution to a new GIID matrix.
That is because $Q_j$ would be formed from columns of $A$, which would be independent of $\Omega$.
Similarly, $Q_{b,j}$ would be formed to triangularize the leading columns of $\Omega Q_j$, which would be independent of subsequent columns.
As such, the trailing column norms in $B$ would remain unbiased predictors for the trailing column norms in $A$.
However, that is not the case; prior column selections depended on the sample outcome.
As a consequence, the remaining lower-right partition of the sample factorization is no longer equivalent in distribution to a GIID compression.
We now examine the new estimator probability distribution in detail.

Suppose we have a partial factorization of the sample matrix after $j$ rows and columns are complete:
\[
	Q_{b,j}^{T} B P_j =
		\begin{blockarray}{[cccccc]}
			s_{1,1} & \cdots & s_{1,j} & s_{1,j+1}     & \cdots & s_{1,n} \\
			0       & \ddots & \vdots  &               & \vdots &         \\
			0       &        & s_{j,j} & s_{j,j+1}     & \cdots & s_{j,n} \\
			0       & \cdots & 0       & \hat{b}_{j+1} & \cdots & \hat{b}_n \\
		\end{blockarray}
		\mbox{.}
\]
Let $i = 1, \ldots, j$ be any completed row index above.
Likewise, remaining columns are indexed by $j' = j+1, \ldots, n$.
At a previous iteration $i$, QRCP had pivoted the largest trailing norm to the leading edge which then became the new diagonal element.
That means
\[
	s_{i,i}^2 \ge \sum_{k=i}^j s_{k,j'}^2 + \norm{\hat{b}_{j'}}_2^2
	\quad\mbox{ for all }\quad
	i \le j
	\quad\mbox{ and }\quad
	j' > j.
\]
In order for the estimator $\norm{\hat{b}_{j'}}_2^2$ to be consistent with the $i$\textsuperscript{th} pivot decision, it must be bounded from above.
As before, we write the sample trailing norm as a factor of the true trailing norm $\norm{\hat{b}_{j'}}_2^2 = x_{j'} \norm{\hat{a}_{j'}}_2^2$ which gives
\[
	x_{j'} \le \tau_{i,j'}
	\quad\mbox{ where }\quad
	\tau_{i,j'} = \frac{1}{\norm{\hat{a}_{j'}}_2^2} \left( s_{i,i}^2 - \sum_{k=i}^j s_{k,j'}^2 \right)
	\mbox{.}
\]
For each remaining column $j'>j$, the controlling upper bound is obtained by minimizing over all previous rows $i \le j$:
\[
     x_{j'} \le \tau_{j'}
     \quad\mbox{ where }\quad
     \tau_{j'} = \min_{i=1, \ldots, j} \tau_{i,j'}
     \mbox{.}
\]

\begin{theorem}
Given a partial factorization as above in which $\tau_{j'}$ is known for any remaining column $j'>j$, the trailing 2-norm-squared of the sample corresponds to a truncated Chi-squared distribution with $\ell - j$ degrees of freedom.
The probability density function is
\[
	\rho_{\ell-j,\tau_{j'}}(x_{j'}) =
	\begin{cases}
		\frac{ \left( \frac{x_{j'}}{2} \right)^{\frac{\ell-j}{2}} e^{-\frac{x_{j'}}{2}} }{x_{j'} \gamma(\frac{\ell-j}{2}, \frac{\tau_{j'}}{2})} & x_{j'} \leq \tau_{j'} \\
		0 & x_{j'} > \tau_{j'} \\
	\end{cases}.
\]
The normalization factor $\gamma()$ in the formulas above is the lower-incomplete gamma function $\gamma(a,z) = \int_0^z t^{a-1} e^{-t} dt$.
\end{theorem}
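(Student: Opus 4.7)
The plan is to first derive the unconditional distribution of each trailing sample column norm from the factorization established in equation~\eqref{eq:sample}, and then graft on the truncation induced by conditioning on the pivot sequence.

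First I would fix a specific pivot sequence $P_j$ and condition on it. Under this conditioning, $Q_j$ is a deterministic function of the columns of $A$, so by the rotational invariance of GIID matrices, $\Omega Q_j$ has the same distribution as $\Omega$. Splitting it columnwise as $[\Omega Q_j(:,1\!:\!j) \;\; \Omega Q_j(:,j{+}1\!:\!m)]$ produces two independent GIID blocks. Since $Q_{b,j}$ arises from a QR factorization of the leading block alone, $Q_{b,j}$ is independent of the trailing block. Recalling from the derivation preceding the theorem that $W_{21}^{(j)} = 0$, only the lower-right block $W_{22}^{(j)}$ matters for the trailing sample.

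Next I would compute $W_{22}^{(j)} = Q_{b,j}(:,j{+}1\!:\!\ell)^{T}\, \Omega Q_j(:,j{+}1\!:\!m)$. Each column of the right-hand factor is an independent $\mathcal{N}(0, I_\ell)$ vector, and $Q_{b,j}(:,j{+}1\!:\!\ell)$ is an orthonormal basis for an $(\ell-j)$-dimensional subspace that is independent of that factor. Rotational symmetry of the standard Gaussian then gives that each column of $W_{22}^{(j)}$ is an independent $\mathcal{N}(0, I_{\ell-j})$ vector. Writing $\|\hat b_{j'}\|_2^2 = x_{j'} \|\hat a_{j'}\|_2^2$ and using $S_{22}^{(j)} = W_{22}^{(j)} R_{22}^{(j)}$, I would conclude that marginally $x_{j'}$ follows the Chi-squared density $\rho_{\ell-j}(x_{j'})$, independently of the true trailing norm $\|\hat a_{j'}\|_2$.

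Finally I would introduce the effect of selection. The pivots chosen at steps $1, \ldots, j$ were the respective sample maxima, which translates exactly into the inequalities $x_{j'} \le \tau_{j'}$ derived immediately before the theorem statement. The conditional density of $x_{j'}$ given these events (together with $P_j$ and the already-determined entries of $S$) is therefore the Chi-squared density $\rho_{\ell-j}$ restricted to $[0, \tau_{j'}]$. A substitution $t = 2u$ in the normalizer $\int_0^{\tau_{j'}} \rho_{\ell-j}(t)\, dt$ produces the factor $\gamma\!\left(\tfrac{\ell-j}{2},\tfrac{\tau_{j'}}{2}\right) / \Gamma\!\left(\tfrac{\ell-j}{2}\right)$, and dividing by it replaces $\Gamma$ in the denominator of the Chi-squared density by the incomplete gamma $\gamma$, yielding the stated truncated density. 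The main obstacle I anticipate is making the independence chain airtight: one must condition on the pivot sequence at the outset, then carefully peel off the leading columns of $\Omega Q_j$ so that $Q_{b,j}$ and the remaining trailing block depend on disjoint blocks of GIID entries; a secondary subtlety is that $\tau_{j'}$ is data-dependent, so the result is properly a conditional density given the already-computed portion of the factorization, and the argument above produces exactly that conditional density.
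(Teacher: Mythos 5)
Your proposal follows the same two-step outline as the paper's argument (establish the unconditional Chi-squared law of $x_{j'}$ by a rotational-invariance/independence argument, then truncate by conditioning on the observed pivot history), and your derivation of the unconditional $\chi^2_{\ell-j}$ law for columns of $W_{22}^{(j)}$ is a genuinely more detailed version of what the paper only sketches in the discussion preceding the theorem. However, there are two places where the argument as written does not yet close.

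First, in the opening paragraph you ``fix a specific pivot sequence $P_j$ and condition on it,'' and then immediately assert that $\Omega Q_j$ has the same distribution as $\Omega$ by rotational invariance. That invariance holds \emph{unconditionally}, but once you condition on $P_j$ (which is a function of $\Omega$) the distribution of $\Omega$ is no longer GIID; that is precisely the bias the theorem is quantifying. What you mean is the \emph{counterfactual} distribution in which the pivot sequence is exogenous, and this is the unconditional $\rho(x_{j'})$ that the paper later feeds into Bayes' theorem. The language of ``conditioning'' here conflates the counterfactual with the true conditional and needs to be untangled.

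Second, and more substantively, the final paragraph asserts that the conditional density given $P_j$ and the already-determined leading entries of $S$ ``is therefore'' the $\chi^2_{\ell-j}$ density restricted to $[0,\tau_{j'}]$. This is the content of the theorem, not a consequence that can be read off from the inequalities. The conditioning event is the entire pivot history, which involves constraints on \emph{all} trailing columns simultaneously, not only the interval constraint on column $j'$; to reduce it to a clean truncation you must argue that the likelihood $\texttt{Pr}(P^*\mid x_{j'})$ is a \emph{constant} (independent of $x_{j'}$) for $x_{j'}<\tau_{j'}$ and zero for $x_{j'}>\tau_{j'}$, so that multiplying by the prior $\rho(x_{j'})$ merely chops off the tail without reshaping the bulk. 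The paper supplies exactly this step via Bayes' theorem, and it is the essential observation you would need to add. With that constancy argument in place, your independence-based setup and the substitution yielding the incomplete-gamma normalizer are both correct.
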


\begin{proof}
The conditional probability density function for $x_{j'}$ given a particular permutation $P^*$ satisfies Bayes' theorem in the form
\[
	\rho(x_{j'} | P^*) \texttt{Pr}( P^*) =
	\texttt{Pr} ( P^* | x_{j'} ) \rho(x_{j'})
\]
where we have a discrete probability function in the variable $P^*$ and continuous distributions in the variable $x_{j'}$.
If the controlling upper bound above is not satisfied then QRCP would not have produced the permutation $P^*$.
It follows that $\texttt{Pr} ( P^* | x_{j'} )=0$ for $x_{j'}>\tau_{j'}$.
Conversely, as long as $x_{j'}<\tau_{j'}$ the particular value of $x_j$ does not affect any branch decisions in QRCP thus far.
Within this region $\texttt{Pr} ( P^* | x_{j'} )$ is a constant independent of $x_{j'}$.
It follows
\[
	\rho(x_{j'} | P^*) \propto
	\begin{cases}
	  \rho(x_{j'}) & x_{j'} < \tau_{j'} \\
		        0 & x_{j'} > \tau_{j'} \\
	\end{cases}
	\mbox{.}	
\]
Normalizing to unit cumulative probability gives the stated result \quad\end{proof}

The truncated Chi-squared distribution above has the expectation value
\[
	\mathbb{E}(x_{j'}) = (\ell-j) \left(
		1 - \frac{e^{-\frac{\tau_{j'}}{2}}}{f(\frac{\ell-j}{2},\frac{\tau_{j'}}{2})}
	\right)
	\enskip\mbox{where}\enskip
	f(s,z) = \frac{s \gamma(s, z)}{z^s}
	\enskip\mbox{and}\enskip
	\lim_{z \to 0} f(s,z) = 1.
\]
The expectation value is monotonically increasing from zero at $\tau_{j'}=0$ and approaches that of the complete Chi-squared distribution with $\ell - j$ degrees of freedom as $\tau_{j'}$ grows large.

This only affects pivot selection if the leading candidate is biased down more than alternatives.
Let us consider a scenario in which two columns $j_1$ and $j_2$ have equivalent trailing norms $\norm{\hat{a}_{j_1}}_2 = \norm{\hat{a}_{j_2}}_2 = \alpha$.
Then the expectation values of the trailing sample column norms-squared are
\[
	\mathbb{E}(\norm{\hat{b}_{j_1}}_2^2) = \alpha^2 \mathbb{E}(x_{j_1})
	\quad\mbox{and}\quad
	\mathbb{E}(\norm{\hat{b}_{j_2}}_2^2) = \alpha^2 \mathbb{E}(x_{j_2}).
\]
This is informative because both cases grow with the same monotonically increasing function of $\tau_{j_1}$ and $\tau_{j_2}$ respectively.
For simplicity, let us assume that the same row index $i$ gives the controlling upper bounds forming $\tau_{j_1}$ and $\tau_{j_2}$.
If $\sum_{k=i}^j s_{k,{j_1}}^2 > \sum_{k=i}^j s_{k,{j_2}}^2$ then $\tau_{j_1} < \tau_{j_2}$ which means the sample is more likely to select column $j_2$.
In other words, the sample is biased in favor of the column that had smaller components in the previously factored subspace.
In contrast, QRCP would not prefer one column over the other and it would simply select the column with the lower index.

It might be possible to construct an algorithm that accounts for decreased expectation values by either scaling or re-sampling when we are no longer confident that the sample adequately represents the true trailing matrix.
We also note that when all of the remaining trailing columns in the sample have relatively small norms in comparison to previously factored components---which occurs at gaps in the spectrum---then the cutoff thresholds are relatively large.
Thus the sample behaves more like an unbiased sample on remaining columns at spectral gaps.

\section{Sample updates}

The original sample matrix $B=\Omega A$ was constructed with rank $\ell = k + p$ where $k$ was the desired approximation rank and $p$ was additional padding required to ensure the sample error remained below an acceptable threshold.
Accordingly, we could only safely select a block of $b=k$ pivots from the sample.
As $k$ increases, however, it becomes inefficient to simply increase the sample rank $\ell$.
In the extreme case, a full decomposition would require a sample just as big as the original matrix which would eliminate the performance advantage of using the sample to select pivots.
If we require a decomposition with a larger rank than that which can be efficiently sampled and blocked, that is if the sample rank cannot exceed $\ell = b + p$, but we require $k>b$, then we need to update the sample matrix.
Doing so will allow the algorithm to continue selecting pivots after the first block is processed.
Martinsson's approach~\cite{Martinsson2015} applies a new compression matrix to the trailing columns to continue.
This version, Repeated-Sampling Randomized QRCP (RSRQRCP), is outlined in Algorithm~\ref{alg:rsrqrcp}.
We propose a sample update formulation that does not require multiplying the trailing matrix by a new compression matrix to continue.
As a result, the proposed update reduces level-3 BLAS communication in the overall factorization by at least one third.

The update formula we now derive is an extension of the implicit update mechanism described in the prior section.
Note that both Algorithm~\ref{alg:rqrcp}, RQRCP, and Algorithm~\ref{alg:trqrcp}, TRQRCP, that follow will proceed in blocks of pivots.
Bracket superscripts denote results of a computation that occurred on the indicated block-iteration.
At entry to the first block-iteration the sample is represented $B^{[0]}=\Omega^{[0]} A^{[0]}$ where $A^{[0]}$ represents the original matrix.
Continuing from Equation~\ref{eq:sample} at the end of block-iteration $J$ the sample array is left in the transformed state:
\begin{equation}
	\begin{bmatrix}
		S_{11}^{[J]} & S_{12}^{[J]} \\
    0            & S_{22}^{[J]} \\
  \end{bmatrix}
  =
  \begin{bmatrix}
  	W_{11}^{[J]} & W_{12}^{[J]} \\
    0            & W_{22}^{[J]} \\
  \end{bmatrix}
  \begin{bmatrix}
  	R_{11}^{[J]} & R_{12}^{[J]} \\
    0            & A^{[J]} \\
  \end{bmatrix}.
  \label{eq:sampleR}
\end{equation}

$S_{11}^{[J]}$ is the leading upper triangle from the partial factorization of the sample.
$S_{22}^{[J]}$ gives the trailing columns of the sample.
Likewise $R_{11}^{[J]}$ and $A^{[J]}$ respectively give the leading upper triangle and trailing columns that would be obtained by factorizing the original matrix with the same pivots.
By absorbing the transformations $Q_b^{[J]T}$ and $Q^{[J]}$ into $\Omega^{[J]}$ we obtained an effective compression matrix $W_{22}^{[J]}$ which had already been implicitly applied to the trailing columns: $S_{22}^{[J]} = W_{22}^{[J]} A^{[J]}$.
The difficulty is $S_{22}^{[J]}$ only has rank $p$.
In order to construct a rank $\ell = b + p$ sample of the trailing matrix $A^{[J]}$ we need to include $W_{12}^{[J]}$ in the updated compression matrix:
\[
    \Omega^{[J]} =
    \begin{bmatrix}
      W_{12}^{[J]} \\
      W_{22}^{[J]} \\
    \end{bmatrix}
    \quad
    \mbox{giving}
    \quad
    B^{[J]} = \Omega^{[J]} A^{[J]} =
    \begin{bmatrix}
    	S_{12}^{[J]} - W_{11}^{[J]} R_{12}^{[J]} \\
      S_{22}^{[J]} \\
    \end{bmatrix}.
\]
In other words, the new compression matrix $\Omega^{[J]}$ is simply $Q_b^{[J]T}\Omega^{[J-1]}Q^{[J]}$ with the leading $b$ columns removed.
This new compression matrix does not need to be explicitely formed or applied to the trailing columns $A^{[J]}$.
Instead we form the result implicitly by removing $W_{11}^{[J]} R_{12}^{[J]}$ from $S_{12}^{[J]}$.
Both $R_{11}^{[J]}$ and $R_{12}^{[J]}$ will be computed in blocked matrix multiply operations using the previous $b$ pivots of $A$.
Since $W_{11}^{[J]}$ can then be recovered from $S_{11}^{[J]}$ we can avoid any direct computations on $\Omega$.
We only need to update the first $b$ rows of $B$ which gives us the sample update formula
\begin{equation}
    \left[ \begin{array}{c}
        B_1^{[J]} \\
        B_2^{[J]} \\
    \end{array} \right]
    = \left[ \begin{array}{c}
        S_{12}^{[J]} - S_{11}^{[J]} R_{11}^{[J]-1} R_{12}^{[J]} \\
        S_{22}^{[J]} \\
      \end{array} \right].
      \label{eq:update1}
\end{equation}

Algorithm~\ref{alg:rqrcp}, RQRCP, describes how the sample update formula is used to produces a full factorization.
Of course this introduces a more complicated form of bias in the updated sample.
Again, if the sample had not been used to select pivots then the updated compression matrix would remain GIID for the same reasons as before.
Analysis would require us to exclude sample outcomes that would be inconsistent with pivot decisions from all previous block iterations.
We do not attempt such analysis at this time, but experimental results in Section~\ref{subsec:decompquality} show that RQRCP performs as well as RSRQRCP on the matrices we tested.

\section{Randomized QRCP}

Full randomized QR with column pivoting (RQRCP) can be structured as a modification to blocked level-3 BLAS QR.
The algorithm must simply interleave processing blocks of reflectors with permutations obtained from the sample matrix.
This is described in detail in Algorithm~\ref{alg:rqrcp}, RQRCP.
For comparison, we outline a version that does not employ sample updates in Algorithm~\ref{alg:rsrqrcp}, Repeated-Sampling RQRCP (RSRQRCP).

\begin{algorithm}[ht]
	\caption{Randomized QR with Column Pivoting, RQRCP}
	\label{alg:rqrcp}
	\begin{algorithmic}[1]
		\Require
			\Statex $A$ is $m \times n$.
		    \Statex $k$ is the desired factorization rank.
$k \le \min{(m,n)}$.
		\Ensure
		    \Statex $Q$ is $m \times m$ orthogonal matrix in the form of $k$ reflectors.
		    \Statex $R$ is $k \times n$ upper trapezoidal (or triangular) matrix.
		    \Statex $P$ is $n \times n$ permutation matrix such that $AP \approx Q(\texttt{:,1:k})R$.
		\MFunction {$[Q,R,P]$}{rqrcp}{$A,k$}
		\State Set sample rank $\ell = b + p$ needed for acceptable sample error.
		\State Generate random $\ell \times m$ GIID matrix $\Omega^{[0]}$.
		\State Form the initial sample $B^{[0]} = \Omega^{[0]} A^{[0]}$.
		\MDo{J=1, 2, \ldots, $\frac{k}{b}$}
		    \State \textbf{Get $b$ column pivots from sample,} $[Q_b^{[J]},S^{[J]},P_b^{[J]}] = \texttt{qrcp}(B^{[J-1]},b)$.
		    \State \textbf{Permute $A^{[J-1]}$ and completed rows in $R$ with $P_b^{[J]}$.}
		    \State Construct $b$ reflectors, $[Q^{[J]},R_{11}^{[J]}] = \texttt{qr}(A^{[J-1]}P_b^{[J]}(\texttt{:,1:b}))$.
		    \State Finish $b$ rows, $R_{12}^{[J]}=Q^{[J]}(\texttt{:,1:b})^T A^{[J-1]}P_b^{[J]}(\texttt{:,b+1:end})$.
		    \State Update trailing matrix, $A^{[J]}=Q^{[J]}(\texttt{:,b+1:end})^T A^{[J-1]}P_b^{[J]}(\texttt{:,b+1:end})$.
		    \State \textbf{Update sample,} $B_{1}^{[J]}=S_{12}^{[J]}-S_{11}^{[J]}R_{11}^{[J]-1} R_{12}^{[J]}$ and $B_{2}^{[J]}=S_{22}^{[J]}$.
		\EndMDo
		\State $Q = Q^{[1]} Q^{[2]} \ldots Q^{[k/b]}$.
		\State $P = P_b^{[1]} P_b^{[2]} \ldots P_b^{[k/b]}$.
    	\EndMFunction
	\end{algorithmic}
\end{algorithm}

\begin{algorithm}[hb]
	\caption{Repeated-Sampling Randomized QRCP, RSRQRCP}
	\label{alg:rsrqrcp}
	\begin{algorithmic}[1]
		\Require
			\Statex $A$ is $m \times n$.
		    \Statex $k$ is the desired factorization rank.
$k \le \min{(m,n)}$.
		\Ensure
		    \Statex $Q$ is $m \times m$ orthogonal matrix in the form of $k$ reflectors.
		    \Statex $R$ is $k \times n$ upper trapezoidal (or triangular) matrix.
		    \Statex $P$ is $n \times n$ permutation matrix such that $AP \approx Q(\texttt{:,1:k})R$.
		\MFunction {$[Q,R,P]$}{rsrqrcp}{$A,k$}
		\State Set sample rank $\ell = b + p$ needed for acceptable sample error.
		\MDo{J=1, 2, \ldots, $\frac{k}{b}$}
		    \State \textbf{Generate $\ell \times (m-Jb)$ GIID matrix $\Omega^{[J-1]}$}
		    \State \textbf{Form sample $B^{[J-1]} = \Omega^{[J-1]} A^{[J-1]}$}
		    \State Get $b$ column pivots from sample, $[Q_b^{[J]},S^{[J]},P_b^{[J]}] = \texttt{qrcp}(B^{[J-1]},b)$.
		    \State Permute $A^{[J-1]}$ and completed rows in $R$ with $P_b^{[J]}$.
		    \State Construct $b$ reflectors, $[Q^{[J]},R_{11}^{[J]}] = \texttt{qr}(A^{[J-1]}P_b^{[J]}(\texttt{:,1:b}))$.
		    \State Finish $b$ rows, $R_{12}^{[J]}=Q^{[J]}(\texttt{:,1:b})^T A^{[J-1]}P_b^{[J]}(\texttt{:,b+1:end})$.
		    \State Update trailing matrix, $A^{[J]}=Q^{[J]}(\texttt{:,b+1:end})^T A^{[J-1]}P_b^{[J]}(\texttt{:,b+1:end})$.
		\EndMDo
		\State $Q = Q^{[1]} Q^{[2]} \ldots Q^{[k/b]}$.
		\State $P = P_b^{[1]} P_b^{[2]} \ldots P_b^{[k/b]}$.
    	\EndMFunction
	\end{algorithmic}
\end{algorithm}

When QRCP is applied to the sample matrix $B$, only a partial decomposition is necessary.
The second argument $b$ in the subroutine call $\texttt{qrcp}(B^{[J]},b)$ indicates that only $b$ column permutations are required.
Although the additional cost of processing all $\ell$ columns should be small, halting the computation early is a trivial modification.

After sample pivots have been applied to the array containing both $A$ and $R$, we perform QR factorization on the new leading $b$ columns of the trailing matrix.
Reflectors are then applied to the trailing matrix and we form the sample update $B^{[J]}$ to prepare for the next iteration.

\subsection{Truncated RQRCP avoiding trailing update}

The trailing matrix is usually not needed for low-rank approximations and the algorithm can be reformulated to run roughly twice as fast on large matrices, provided $k\ll\min(m,n)$.
This is accomplished by avoiding the trailing update which reduces large matrix multiplications by half.

The technique is analogous to the method Quintana-Ort\'{\i} \textit{et al.\@} used to halve level-2 BLAS operations in QRCP.
In their version of QRCP, all reflector inner products are computed, but rows and columns are only updated as needed.
In order to compute correct reflector inner products without having updated the trailing matrix, we need block reflector composition formulas.
\[
    (I - Y_1 T_1 Y_1^T)(I-Y_2 T_2 Y_2^T) = I - Y T Y^T
\]
where $Y$ and $T$ are partitioned
\[
    Y =
    \begin{bmatrix}
    	Y_1 & Y_2 \\
    \end{bmatrix}
    \quad\mbox{ and }\quad
    T =
    \begin{bmatrix}
    	T_1 & -T_1 Y_1^T Y_2 T_2 \\
    	0   & T_2 \\
    \end{bmatrix}
    \mbox{.}
\]
Corresponding reflector inner products $W^T = T^T Y^T A$ are
\[
    W^T =
    \begin{bmatrix}
        W_1^T \\
        W_2^T \\
    \end{bmatrix}
    \quad
    \mbox{with}
    \quad
    W_1^T = T_1^T Y_1^T A
    \quad
    \mbox{and}
    \quad
    W_2^T = T_2^T \left( Y_2^T A - (Y_2^T Y_1) W_1^T \right)
    \mbox{.}
\]
If these reflector inner products are stored, then we can construct any sub-matrix of the accumulated transformation $\hat{A}^{[J]} = A - Y^{[J]} W^{[J]T}$ as needed.
Columns that are selected by sample pivots are constructed just before becoming the next reflectors and corresponding rows of $R$ are constructed just before being used to update the sample.
The trailing-update-avoiding algorithm Truncated Randomized QR with Column Pivot (TRQRCP) is outlined in Algorithm~\ref{alg:trqrcp}.

\begin{algorithm}[htb]
	\caption{Truncated RQRCP without trailing update, TRQRCP}
	\label{alg:trqrcp}
	\begin{algorithmic}[1]
		\Require
			\Statex $A$ is $m \times n$.
			\Statex $k$ approximation rank.
$k\ll\min(m,n)$.
		\Ensure
		    \Statex $Q$ is $m \times m$ orthogonal matrix in the form of $k$ reflectors.
		    \Statex $R$ is $k \times n$ upper trapezoidal matrix.
		    \Statex $P$ is $n \times n$ permutation matrix such that $AP \approx Q(\texttt{:,1:k})R$.
		\MFunction {$[Q,R,P]$}{rqrcp}{$A,k$}
		\State Set sample rank $\ell = b + p$ needed for acceptable sample error.
		\State Generate random $\ell \times m$ GIID matrix $\Omega^{[0]}$.
		\State Form the initial sample $B^{[0]} = \Omega^{[0]} A{[0]}$.
		\MDo{J=1, 2, \ldots, $\frac{k}{b}$}
		    \State Obtain $b$ pivots from the sample, $[Q_b^{[J]},S^{[J]},P_b^{[J]}] = \texttt{qrcp}(B^{[J]},b)$.
		    \State Permute $A^{[J]} = A^{[J-1]}P_b^{[J]}$ as well as completed rows of $R$.
		    \State Permute prior inner products, $W_1^{[J]T}=W^{[J-1]T}P_b^{[J]}$.
		    \State \textbf{Construct selected columns, $\hat{A}_J$, from $A^{[J]} - Y^{[J-1]} W_1^{[J]T}$.}
		    \State Form reflectors $Y_2^{[J]}$ using $[Q^{[J]},R_{11}^{[J]}] = \texttt{qr}(\hat{A}_J)$.
		    \State \textbf{Form inner products, $W_2^{[J]} = T_2^{[J]T} ( Y_2^{[J]T} A^{[J]} - (Y_2^{[J]T} Y^{[J-1]}) W_1^{[J]T} )$.}
		    \State Augment $Y^{[J]} = [Y^{[J-1]} \enspace Y_2^{[J]}]$ and $W^{[J]} = [W_1^{[J]} \enspace W_2^{[J]}]$.
		    \State \textbf{Construct new rows of $R$ from $A^{[J]} - Y^{[J]} W^{[J]T}$.}
		    \State Update sample, $B_{1}^{[J]}=S_{12}^{[J]}-S_{11}^{[J]}R_{11}^{[J]-1} R_{12}^{[J]}$ and $B_{2}^{[J]}=S_{22}^{[J]}$.
		\EndMDo
		\State $Q = Q^{[1]} Q^{[2]} \ldots Q^{[k/b]}$.
		\State $P = P_b^{[1]} P_b^{[2]} \ldots P_b^{[k/b]}$.
    	\EndMFunction
	\end{algorithmic}
\end{algorithm}

\section{Approximation of truncated SVD}

TRQRCP naturally extends to an approximation of the truncated Singular Value Decomposition (SVD).
This follows the QLP method proposed by Stewart~\cite{journals/siamsc/Stewart99}.
The QLP decomposition proceeds by first applying QRCP to obtain $A P_0 = Q_0 R$.
Then the right triangular matrix $R$ is factored again using an LQ factorization $P_1 R = L Q_1$ where row-pivoting is an optional safeguard (otherwise $P_1=I$).
This gives the factored form $A=(Q_0 P_1^T) L (Q_1 P_0^T)$.
The diagonal elements of $L$ give a very good approximation of the singular values of $A$.
Analysis is done by Huckaby and Chan~\cite{journals/na/HuckabyC03}.

The approximate truncated SVD proposed here simply applies low-rank versions of the steps in QLP.
The rank-$k$ approximation that results is exactly the same as the truncated approximation that would be obtained if QLP had been processed to completion using RQRCP---without secondary row-pivoting---and then truncated to a rank-$k$ approximation.

We begin by using TRQRCP to produce $k$ left reflectors to obtain the initial left orthogonal matrix $U^{(0)}$.
Results are simultaneously compared to what would have been obtained by full RQRCP-based QLP:
\[
	A P^{(0)} \approx
  	\begin{bmatrix} U^{(0)}_1 & U^{(0)}_2 \end{bmatrix}
  	\begin{bmatrix} R^{(0)}_{11} & R^{(0)}_{12} \\ 0 & 0 \\ \end{bmatrix}
\]
\[
    \wide{versus}\quad
    A P^{(0*)} =
  	\begin{bmatrix} U^{(0)}_1 & U^{(0*)}_2 \end{bmatrix}
  	\begin{bmatrix} R^{(0)}_{11} & R^{(0*)}_{12} \\ 0 & R^{(0*)}_{22} \\ \end{bmatrix}.
\]
The additional pivoting produced from the full factorization is denoted by the asterisk.
Clearly the first $k$ pivots in $P^{(0)}$ and corresponding reflectors in $U^{(0)}$ are the same.
The corresponding rows in $R^{(0)}$ are also the same modulo additional column permutations.
We can reverse these permutations to construct the $k \times n$ matrix $Z^{(0)}=R^{(0)} P^{(0)T}$:
\[
	A \approx
  	\begin{bmatrix} U^{(0)}_1 & U^{(0)}_2 \end{bmatrix}
    \begin{bmatrix} Z^{(0)}_{11} & Z^{(0)}_{12} \\ 0 & 0 \\ \end{bmatrix}
  \quad\mbox{versus}\quad
  A =
  	\begin{bmatrix} U^{(0)}_1 & U^{(0*)}_2 \end{bmatrix}
  	\begin{bmatrix} Z^{(0)}_{11} & Z^{(0)}_{12} \\ Z^{(0*)}_{21} & Z^{(0*)}_{22} \\ \end{bmatrix}.
\]

Taking the LQ factorization from $Z^{(0)}$, $L^{(1)} V^{(1)T} = Z^{(0)}$, instead of from $R^{(0)}$ simply absorbs the permutation $P^{(0)T}$ into the definition of $V^{(1)T}$.
That gives
\[
	A \approx
  	\begin{bmatrix} U^{(0)}_1 & U^{(0)}_2 \end{bmatrix}
    \begin{bmatrix}
    	L^{(1)}_{11} & 0 \\
      0            & 0 \\
    \end{bmatrix}
    \begin{bmatrix}
    	V^{(1)T}_1 \\
      V^{(1)T}_2 \\
    \end{bmatrix}
\]
\[
  \mbox{versus}\quad
  A =
  	\begin{bmatrix} U^{(0)}_1 & U^{(0*)}_2 \end{bmatrix}
  	\begin{bmatrix}
    	L^{(1)}_{11}  & 0             \\
    	L^{(1*)}_{21} & L^{(1*)}_{22} \\
    \end{bmatrix}
    \begin{bmatrix}
    	V^{(1)T}_1 \\
      V^{(1*)T}_2 \\
    \end{bmatrix}.
\]
For consistency with the form that follows, we could label the $k \times k$ connecting matrix $X^{(0)}=L^{(1)}_{11}$.
We return to the connecting matrix after explaining the rest of the algorithm.

The leading $k$ reflectors in $V^{(1)}$ and $V^{(1*)}$ are identical because they are only based on the leading $k$ rows of $Z^{(0)}$ provided no secondary row-pivoting is considered.
At this point, the rank-$k$ approximation of RQRCP-based QLP would require $L^{(1*)}_{21}$ which is unknown.
Fortunately, the leading $k$ columns of $U^{(0*)} L^{(1*)}$ can be reconstructed with one matrix multiply.
We label this $m \times k$ matrix $Z^{(1)}$.
\[
	Z^{(1)} = A V^{(1)}_1 =
  	\begin{bmatrix} U^{(0)}_1 & U^{(0*)}_2 \end{bmatrix}
    \begin{bmatrix}
      L^{(1)}_{11}  \\
      L^{(1*)}_{21} \\
    \end{bmatrix}
    \quad\mbox{in both cases.}
\]
This is QR-factorized $U^{(1)} X^{(1)} = Z^{(1)}$ to produce the approximation:
\begin{equation}
	A \approx
		U^{(1)}
  	\begin{bmatrix}
    	X^{(1)} & 0 \\
      0       & 0 \\
    \end{bmatrix}
    V^{(1)^T}.
    \label{eq:tuxv}
\end{equation}

Further iterations could be computed to produce subsequent $k \times k$ connection matrices $X^{(2)}$, $X^{(3)}$, etc.
which would flip between upper triangular and lower triangular forms.
To do this, one would simply multiply the leading rows of $U^T$ or columns of $V$ on the left and right of $A$ respectively.
This is outlined in Algorithm~\ref{alg:tuxv}, TUXV.
The leading singular values of $A$ are approximated on the diagonals of $X^{(j)}$, however since the connection matrix is small it would be feasible to obtain slightly better approximations by taking the SVD of $X^{(j)}$.
One could also insert mechanisms to iterate until a desired level of convergence is obtained, however as Stewart observed only one QRCP-LQ iteration is needed to produce a reasonable approximation of the SVD.
Note a subtle point of possible confusion is that by setting $j_{\mbox{\scriptsize{max}}}=1$ our algorithm might appear to produce a truncated approximation from the sequence RQRCP-LQ-QR.
It is true that the diagonal elements in $X$ correspond to that sequence, however the resulting factorization is equivalent to what would be obtained by keeping only the leading columns of L after RQRCP-LQ.
The final QR factorization simply extracts an orthogonal basis $U$.
In the next section, we test the performance of TUXV with $j_{\mbox{\scriptsize{max}}}=1$ for both timing and quality experiments.

\begin{algorithm}[htb]
	\caption{Approximation of truncated SVD, TUXV}
	\label{alg:tuxv}
	\begin{algorithmic}[1]
		\Require
			\Statex $A$ is $m \times n$ matrix to approximate.
			\Statex $k$ is approximation rank.
$k\ll\min(m,n)$.
			\Statex $j_{\mbox{\scriptsize{max}}}$ is number of LQ-QR iterations.
We set $j_{\mbox{max}}=1$.
		\Ensure
		    \Statex $U$ is orthogonal $m \times m$ matrix.
		    \Statex $V$ is orthogonal $n \times n$ matrix.
		    \Statex $X$ is $k \times k$ upper or lower triangular matrix.
		    \Statex $A \approx U(:,1:k) X V(:,1:k)^T$.

		\MFunction {$[U,X,V]$}{tuxv}{$A,k,\tau,j_{\mbox{\scriptsize{max}}}$}
		    \State \textbf{TRQRCP-Factorize,} $[U^{(0)},R^{(0)},P^{(0)}]=\texttt{trqrcp}(A,k)$.
		    \State Restore original column order, $Z^{(0)}=R^{(0)}P^{(0)T}$.
			\State \textbf{LQ-Factorize,} $[V^{(1)},X^{(0)T}] = \texttt{qr}(Z^{(0)T})$.
		    \MDo{$j=1,3,5,\ldots$}
		        \State \textbf{Multiply,} $Z^{(j)}=A V^{(j)}(\texttt{:,1:k})$.
		        \State \textbf{QR-Factorize,} $[U^{(j+1)},X^{(j)}] = \texttt{qr}(Z^{(j)})$.
		        \State If $j=j_{\mbox{\scriptsize{max}}}$ then break.
		        \State \textbf{Multiply,} $Z^{(j+1)}=U^{(j+1)}(\texttt{:,1:k})^T A$.
    			\State \textbf{LQ-Factorize,} $[V^{(j+2)},X^{(j+1)T}] = \texttt{qr}(Z^{(j+1)T})$.
		        \State If $j+1=j_{\mbox{\scriptsize{max}}}$ then break.
			\EndMDo
		\EndMFunction
	\end{algorithmic}
\end{algorithm}
\clearpage

\section{Experiments}

Our first Fortran version of RQRCP used simple calls to BLAS and LAPACK subroutines without directly managing workloads among available cores.
Library implementations of BLAS and LAPACK subroutines automatically distribute the computation to available cores using OpenMP.
Although we knew RQRCP should have nearly the same complexity as blocked QR, that version did not compete well with library calls to the LAPACK subroutine \textbf{dgeqrf}, the level-3 BLAS QR factorization.
In order to provide a convincing demonstration of the efficiency of RQRCP, it was necessary to carefully manage workloads using OpenMP within each phase of the main algorithm.
The following experiments show that the subroutines we have proposed, RQRCP and TRQRCP, can be written to require substantially less computation time than the optimized QRCP implementation \texttt{dgeqp3} available through Intel's Math Kernel Library.
The pivots that result from randomized sampling are not the same as those obtained from QRCP, however we claim that they are of similar quality.
In order to compare factorization quality, we construct sequences of partial factorizations and compute the corresponding truncated approximation error in the Frobenius norm.
These simple experiments show that the pivots obtained from RQRCP yield partial factorizations that are nearly indistinguishable to those obtained from QRCP.
We emphasize that RQRCP is not intended to overcome the well-known pitfalls of QRCP in computing the precise rank of difficult test matrices.
It simply produces comparable results at a much lower cost.

Our low-rank approximation experiments include TUXV with $j_{\mbox{\scriptsize{max}}}=1$.
As such, TUXV performs just one additional matrix multiply with a reflector block of dimension $n \times k$ over what is performed by TRQRCP.
These results show that TUXV requires only a modest increase in processing time over optimized truncated QR.
Furthermore, TUXV shows significant improvement in approximation quality over both QRCP and RQRCP.
Quality experiments include the truncated SVD and show that TUXV makes significant progress in approaching this theoretical optimum at a tiny fraction of the cost.
A visual quality comparison of each algorithm is also provided by reconstructing low-rank approximations of a test image.

\subsection{Full decomposition time}
The first set of experiments examine scaling of decomposition time versus problem dimensions for several full matrix decompositions.
These experiments were run on a single node of the NERSC machine Edison.
Each node has two 12-core Intel processors.
Our algorithms were written in Fortran90 with OpenMP.
Subroutines were linked with Intel's Math Kernel Library.
Each matrix used is randomly generated GIID.
The same random matrix is submitted to each algorithm.
Order scaling results in which rows $m$ and columns $n$ scaled together are shown in Figure~\ref{fig:fullOrder}.
Remaining experimental results can be found in the Appendix in Figure~\ref{fig:fullRow}, Figure~\ref{fig:fullCol}, and Figure~\ref{fig:fullPara}.

\begin{table}[!htb]
\centering
\footnotesize
\setlength{\tabcolsep}{5pt}
\begin{tabular}{|l|l|}\hline
	\textbf{Subroutine}	& \textbf{Description} \\ \hline\hline
	\textbf{dgeqr2} & LAPACK level-2 BLAS implementation of QR.\\ \hline
	\textbf{dgesvd} & LAPACK singular value decomposition.\\ \hline
	\textbf{dgeqp3}	& LAPACK competing implementation of QRCP.\\ \hline
	\textbf{rsrqrcp}& Alg~\ref{alg:rsrqrcp} Repeated-Sampling Randomized QRCP.\\ \hline
	\textbf{rqrcp}  & Alg~\ref{alg:rqrcp} Randomized QRCP with sample update.\\ \hline
	\textbf{dgeqrf}	& LAPACK level-3 BLAS implementation of QR.\\ \hline
\end{tabular}
\caption{These algorithms are compared in full decomposition scaling experiments.
Rank-revealing subroutines are \textbf{dgesvd}, \textbf{dgeqp3}, \textbf{rsrqrcp}, and \textbf{rqrcp}.
The subroutine \texttt{dgeqrf} demonstrates the performance limit attainable if no pivoting is attempted.
We also include \texttt{dgeqr2} to show the historical evolution of these algorithms.}
\end{table}

\begin{figure}[!htb]
\label{fig:fullOrder}
\centering
\includegraphics[width=0.8\textwidth]{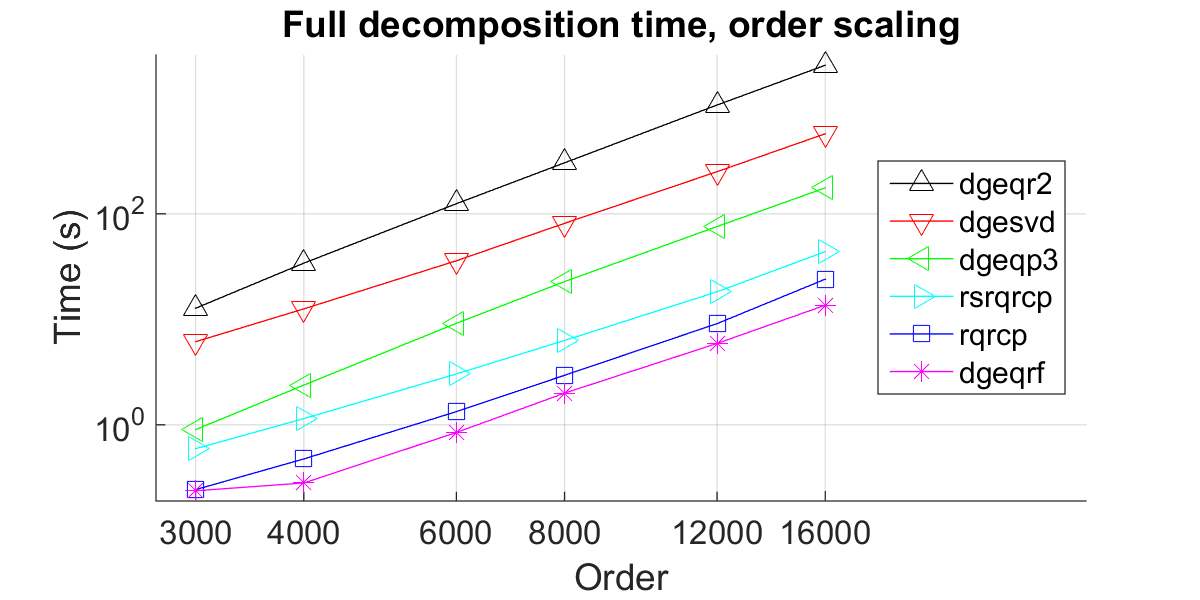}
\caption{24 cores, $m=n$ scaled.}
\end{figure}

\begin{table}[!htb]
\footnotesize
\centering
\setlength{\tabcolsep}{5pt}
\begin{tabular}{|c|c|c|c|c|c|c|}\hline
	\textbf{Order}	&\texttt{dgeqr2}	&\texttt{dgesvd}	&\texttt{dgeqp3}	&\texttt{rsrqrcp}	&\texttt{rqrcp}	&\texttt{dgeqrf}	\\ \hline\hline
	\textbf{ 3k}		&  12.75	&   6.13	&   0.90	&   0.59	&   0.24	&   0.24	\\ \hline
	\textbf{ 4k}		&  34.08	&  12.56	&   2.38	&   1.15	&   0.47	&   0.28	\\ \hline
	\textbf{ 6k}		&  124.7	&  35.90	&   9.16	&   3.04	&   1.33	&   0.85	\\ \hline
	\textbf{ 8k}		&  304.6	&  81.28	&  22.79	&   6.30	&   2.95	&   2.00	\\ \hline
	\textbf{12k}		&   1075	&  250.9	&  75.78	&  18.24	&   9.11	&   5.91	\\ \hline
	\textbf{16k}		&   2563	&  574.5	&  176.2	&  43.84	&  24.03	&  13.61	\\ \hline
\end{tabular}
\caption{Full decomposition order scaling time (s)}
\end{table}

\FloatBarrier

\subsection{Truncated decomposition time} The second set of tests compares truncated approximations using a given truncation rank $k$.
Since the proprietary optimized implementations of LAPACK functions were unavailable for modification, each algorithm was rewritten and adjusted to halt at the desired approximation rank to provide fair comparisons.
Again, each matrix used is GIID and the same matrix is submitted to each algorithm.
Tests for which $m=n=12000$ and $k$ scales are shown in Figure~\ref{fig:truncRank}.
Remaining tests can be found in the Appendix in Figure~\ref{fig:truncRow}, Figure~\ref{fig:truncCol}, and Figure~\ref{fig:truncPara}.

\begin{table}[!htb]
\centering
\footnotesize
\setlength{\tabcolsep}{5pt}
\begin{tabular}{|l|l|}\hline
	\textbf{Subroutine}	& \textbf{Description} \\ \hline\hline
	\textbf{qrcp}    & Alg~\ref{alg:qrcp3} QR with Column Pivoting and blocked trailing update.\\ \hline
	\textbf{rsrqrcp} & Alg~\ref{alg:rsrqrcp} Repeated-Sampling Randomized QRCP.\\ \hline
	\textbf{tuxv}    & Alg~\ref{alg:tuxv} Approximate truncated SVD. No trailing update.\\ \hline
	\textbf{rqrcp}   & Alg~\ref{alg:rqrcp} Randomized QRCP with sample update.\\ \hline
	\textbf{qr}      & Alg~\ref{alg:rqrcp}* QR with blocked trailing update.\\ \hline
	\textbf{trqrcp}  & Alg~\ref{alg:trqrcp} Truncated Randomized QRCP with sample update. No trailing update.\\ \hline
\end{tabular}
\caption{These algorithms are compared in truncated decomposition scaling experiments.
Comparing \textbf{rsrqrcp} with \textbf{rqrcp} reveals the cost of repeated sampling.
Comparing \textbf{rqrcp} with \textbf{qr} reveals the cost of pivot selection from the sample matrix.
Comparing \textbf{rqrcp} with \textbf{trqrcp} further reveals the cost of computing the trailing matrix update.
* Note that the implementation of level-3 BLAS \textbf{qr} is identical to \textbf{rqrcp} after eliminating all sample operations and pivoting.}
\end{table}

\begin{figure}[!htb]
\label{fig:truncRank}
\centering
\includegraphics[width=0.8\textwidth]{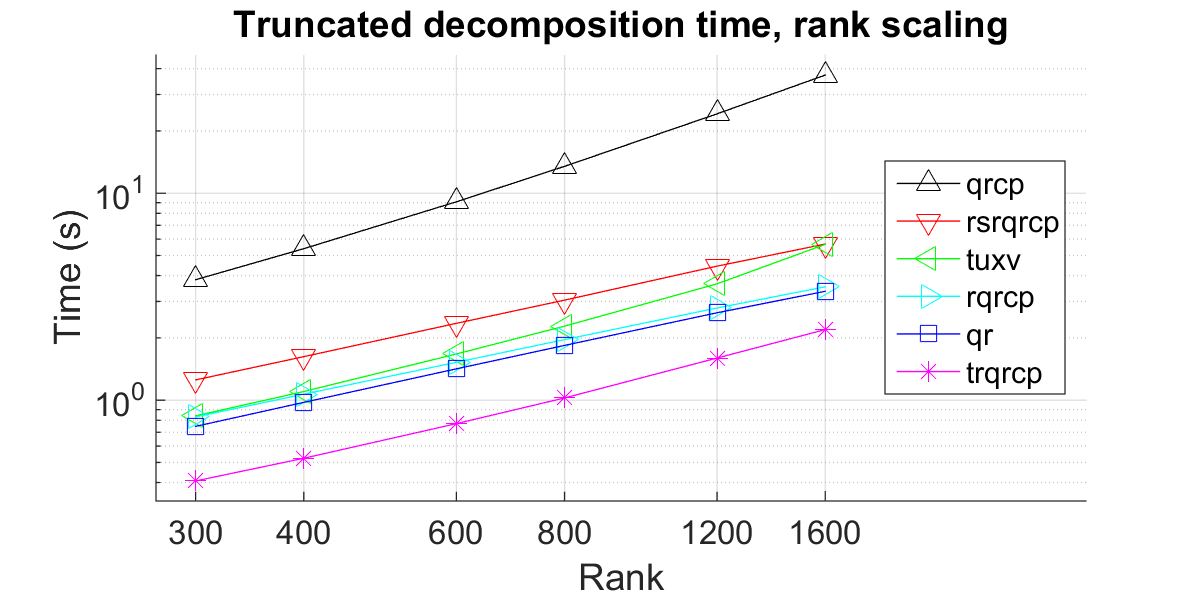}
\caption{24 cores, $m=12000$, $n=12000$, $k$ scaled.}
\end{figure}

\begin{table}[!htb]
\footnotesize
\centering
\setlength{\tabcolsep}{5pt}
\begin{tabular}{|c|c|c|c|c|c|c|}\hline
	\textbf{Rank}	&\texttt{qrcp}	&\texttt{rsrqrcp}	&\texttt{tuxv}	&\texttt{rqrcp}	&\texttt{qr}	&\texttt{trqrcp}	\\ \hline\hline
	\textbf{  300}		&   3.82	&   1.25	&   0.84	&   0.83	&   0.75	&   0.41	\\ \hline
	\textbf{  400}		&   5.40	&   1.62	&   1.10	&   1.07	&   0.98	&   0.52	\\ \hline
	\textbf{  600}		&   9.09	&   2.35	&   1.67	&   1.52	&   1.42	&   0.77	\\ \hline
	\textbf{  800}		&  13.49	&   3.04	&   2.28	&   1.96	&   1.84	&   1.02	\\ \hline
	\textbf{ 1200}		&  24.19	&   4.44	&   3.65	&   2.79	&   2.65	&   1.60	\\ \hline
	\textbf{ 1600}		&  37.25	&   5.68	&   5.66	&   3.53	&   3.36	&   2.19	\\ \hline
\end{tabular}
\caption{Truncated decomposition rank scaling time (s)}
\end{table}

\FloatBarrier

\subsection{Decomposition quality}
\label{subsec:decompquality}

Matrix decomposition quality is compared for the proposed algorithms using three test cases from the San Jose State University Singular Matrix Database.
These test matrices are: \texttt{FIDAP/ex33}, \texttt{HB/lock2232}, and \texttt{LPnetlib/lpi\_gran}.
Each matrix is approximated using a sequence of low-rank decompositions resulting from each algorithm shown.
Relative approximation error in the Frobenius norm is plotted against the corresponding approximation rank.
Plot axes have been chosen to magnify the differences among the algorithms shown.
Relative approximation errors that do not appear on the plot have dropped below the order of machine epsilon, $10\varepsilon \approx 10^{-15}$.
The same test is also shown for a matrix corresponding to a gray-scale image of a differential gear.
Image credit, Alex Kovach~\cite{diffGear}. Test results for \texttt{FIDAP/ex33} and the differential gear are shown in Figure~\ref{fig:ex33} and Figure~\ref{fig:diffGearErr}.
Results for \texttt{HB/lock2232} and \texttt{LPnetlib/lpi\_gran} are found in the appendix: Figure~\ref{fig:lock2232} and Figure~\ref{fig:lpi_gran} respectively.

At the top of each plot we have QR without pivoting.
In order to produce results as competitive as possible QR is applied after presorting columns in order of descending 2-norms.
Despite this modification, QR produces the poorest results.
In several cases the QR approximation error drops much more slowly than all other algorithms.
When this occurs the expanding approximation basis continues to fail to capture the principal components of the trailing matrix.
This demonstrates that QR is not a rank-revealing algorithm.

Below QR we have RQRCP, RSRQRCP, and QRCP.
Recall that RQRCP uses the sample update formula whereas RSRQRCP forms a new sample by multiplying the updated trailing matrix with a new GIID compression matrix after each complete block.
These plots show that both RQRCP and RSRQRCP generally perform as well as QRCP.

Below the QRCP-like algorithms are TUXV and SVD.
The SVD gives the theoretically minimal approximation error for each rank.
In each case, TUXV produces approximation error closer to that of the SVD.

In Figure~\ref{fig:diffGear} we also compare approximation quality by reconstructing the image of the differential gear using low-rank approximations from selected algorithms.
Again, truncated QR shows the poorest reconstruction quality despite presorting.
Both truncated QRCP and TRQRCP produce better results, however, close inspection shows similar fine defects in the reconstructed images.
Reconstruction with RSRQRCP is omitted; Figure~\ref{fig:diffGearErr} shows that the reconstruction would be as good as TRQRCP.
Finally, we have reconstructions using TUXV and the truncated SVD which appear to be indistinguishable from the original.

\begin{figure}[!htb]
	\centering
	\includegraphics[width=0.8\textwidth]{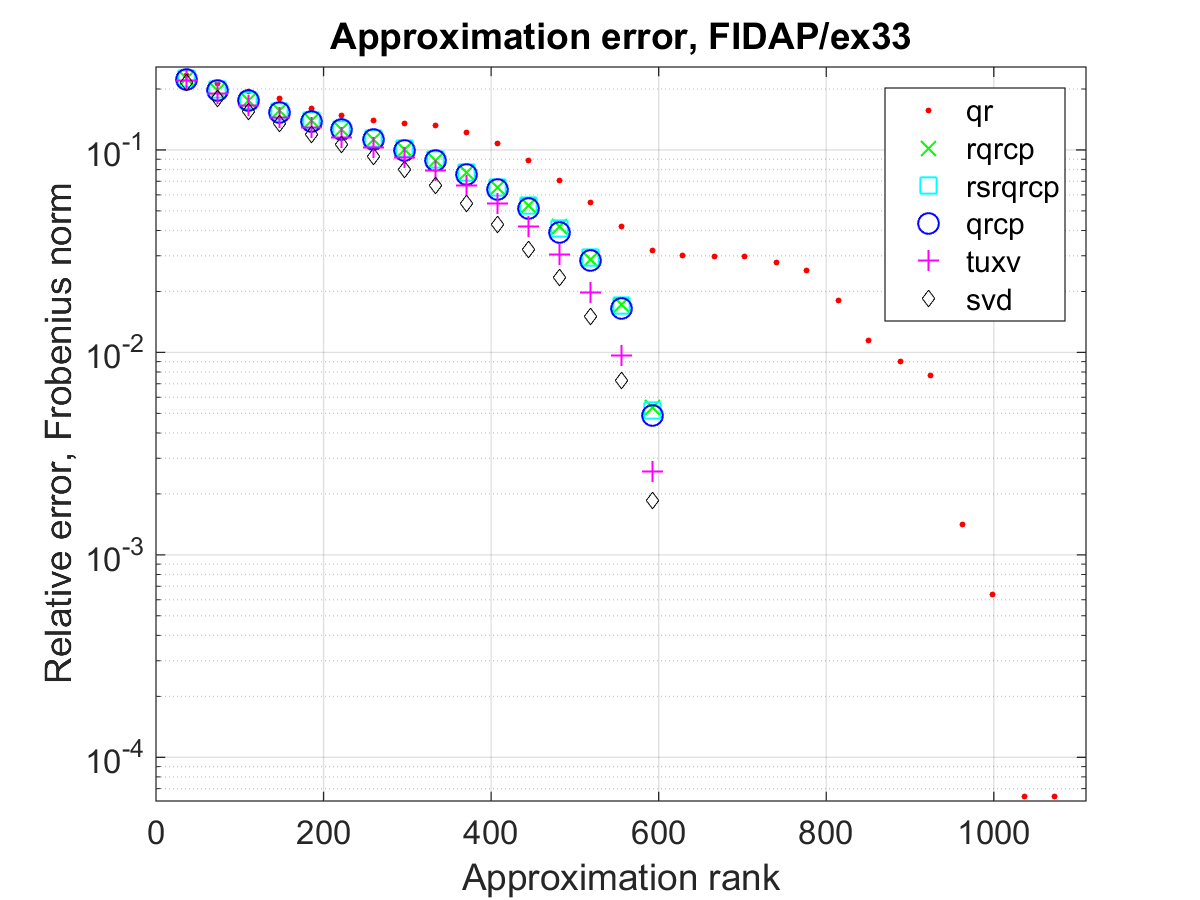}
	\caption{Matrix: FIDAP/ex33. $1733 \times 1733$.}
	\label{fig:ex33}
\end{figure}

\begin{figure}[!htb]
	\centering
	\includegraphics[width=0.8\textwidth]{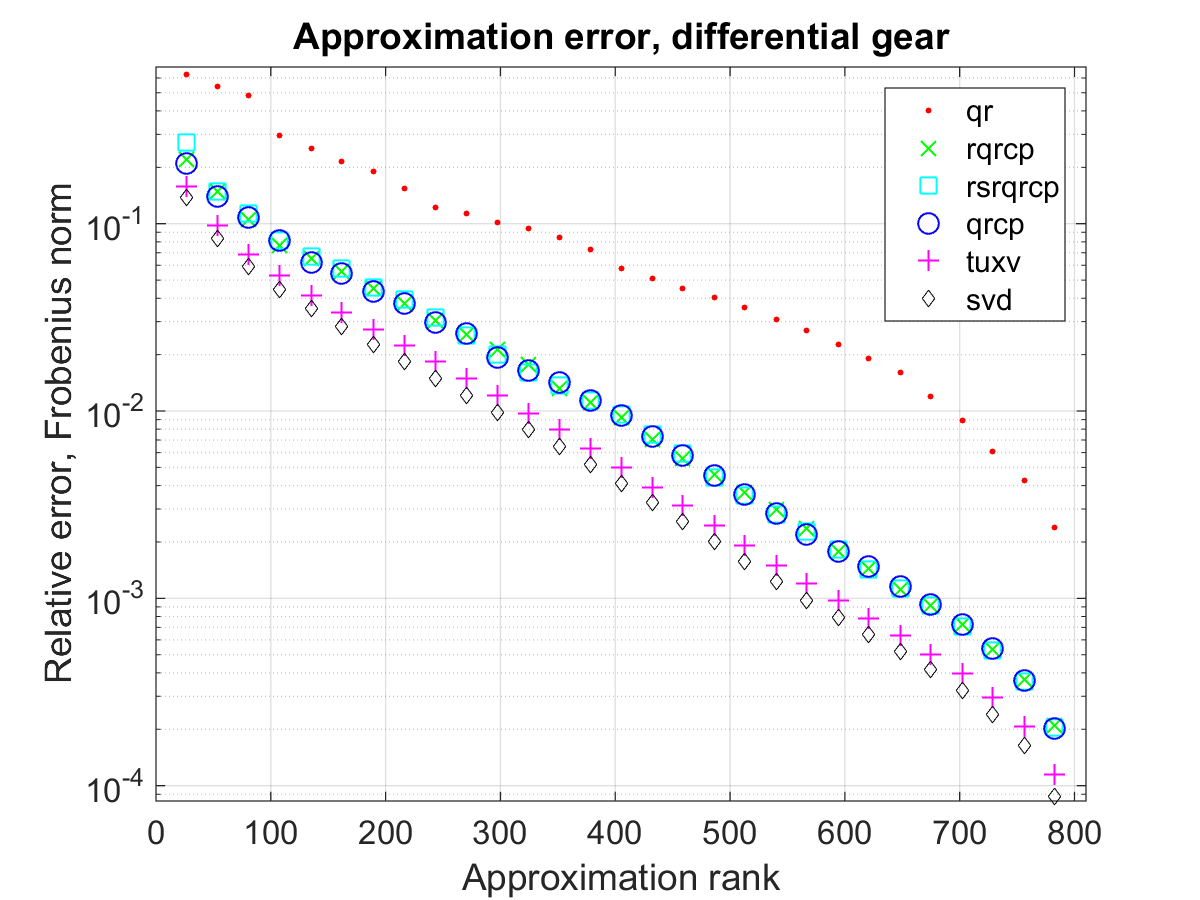}
	\caption{Matrix: Differential Gear~\cite{diffGear}. $1280 \times 804$.}
	\label{fig:diffGearErr}
\end{figure}

\begin{figure}[!htb]
	\centering
	\includegraphics[width=0.9\textwidth]{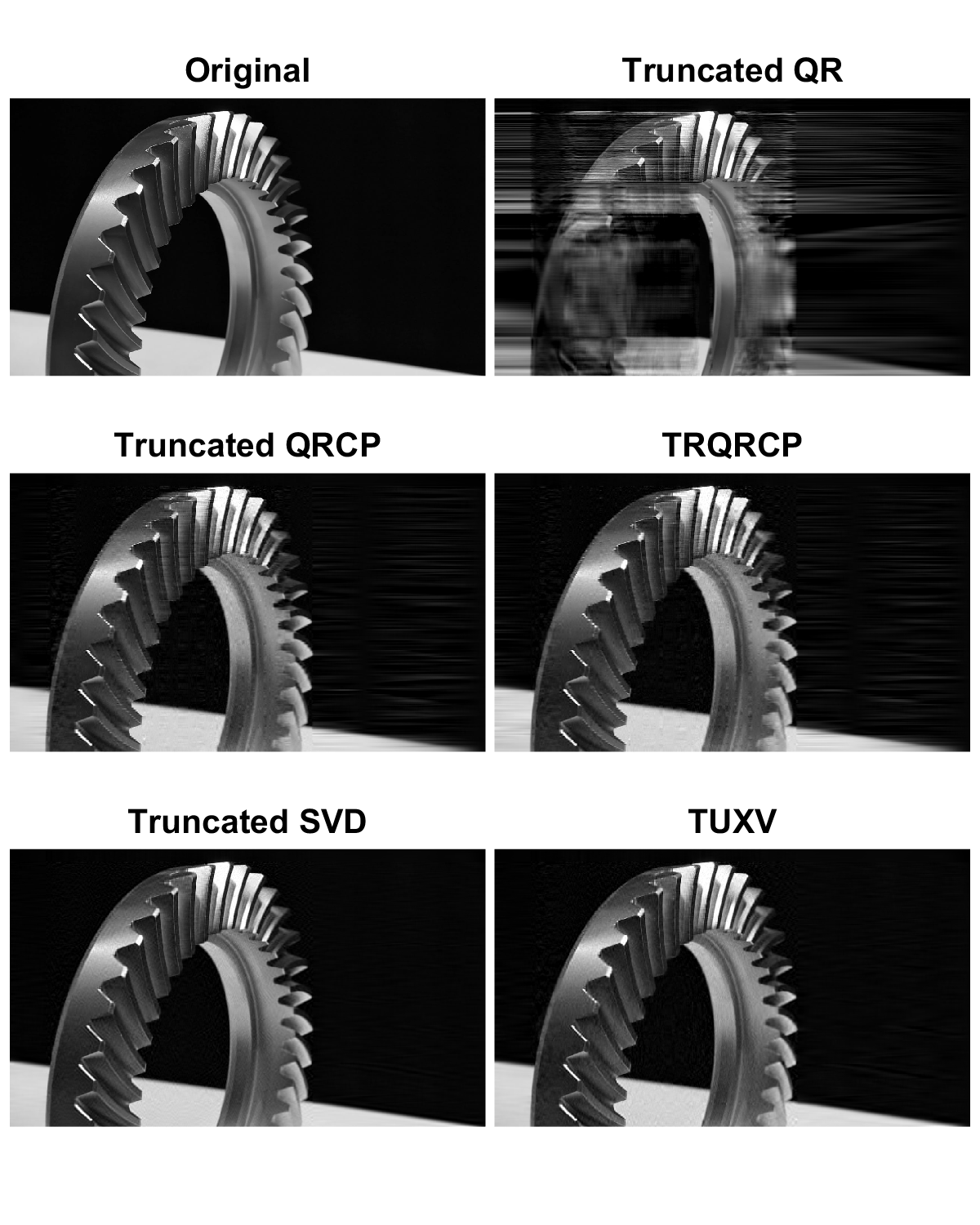}
	\caption{
Low-rank image reconstruction comparison.
Reconstructions are computed from a $1280 \times 804$ grayscale version.
Approximations are reconstructed with rank $k=80$.
Truncated QR is computed with columns presorted by descending 2-norm.
TRQRCP yields nearly the same visual approximation quality as truncated QRCP.
Likewise, TUXV produces an approximation visually similar to the optimal truncated SVD.}
	\label{fig:diffGear}
\end{figure}

\section{Conclusion}

RQRCP achieves pivoting quality near QRCP but at true level-3 BLAS performance and parallel scalability.
This makes RQRCP the algorithm of choice for applications in which a full factorization is required.
For low-rank approximations TRQRCP offers another performance advantage by both halting early and avoiding the trailing update to the original matrix.
For a modest increase in computation time, TUXV further improves the low-rank approximations obtained from TRQRCP and approaches the truncated SVD in quality.
These algorithms open a new performance domain for large matrix factorizations that we believe will be extremely useful in data analysis and possibly machine-learning applications.

Future work on distributed memory implementations would be particularly useful for extremely large matrices that would otherwise be impractical to factorize with QRCP.
Furthermore, existing applications that had to settle for QR due to performance constraints may now find improved stability at very little cost by switching to RQRCP.

\section*{Acknowledgements}

We extend our gratitude to Chris Melgaard, Professor Laura Grigori, and Professor James Demmel for useful conversations regarding this work.
We also thank the reviewers for their hard work and insightful feedback which was very useful in improving the quality of this work.

\nocite{DGGX2013}
\nocite{BDHS2011}
\nocite{B1991}
\nocite{GE1996}
\nocite{Martinsson201147}
\nocite{liberty2007randomized}
\nocite{golub13}
\nocite{journals/siamrev/HalkoMT11}

\bibliographystyle{abbrv}

\bibliography{references}

\newpage

\section{Appendix}

\begin{figure}[!htb]
    \centering
    \includegraphics[width=0.8\textwidth]{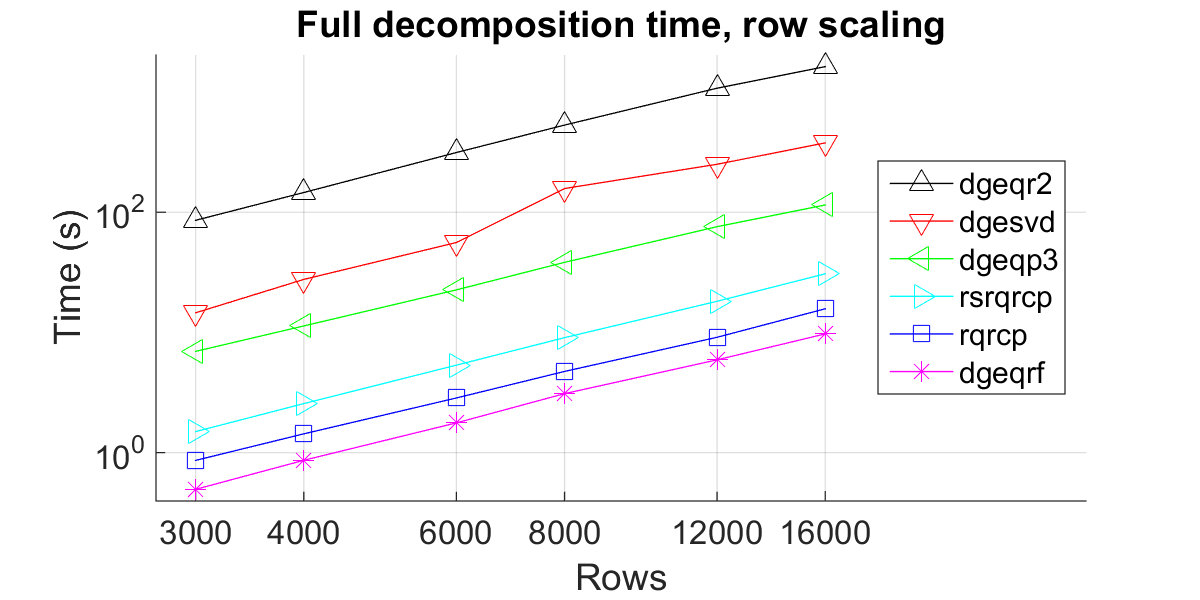}
    \caption{24 cores, $m$ scaled, $n=12000$.}
    \label{fig:fullRow}
\end{figure}

\begin{figure}[!htb]
    \centering
    \includegraphics[width=0.8\textwidth]{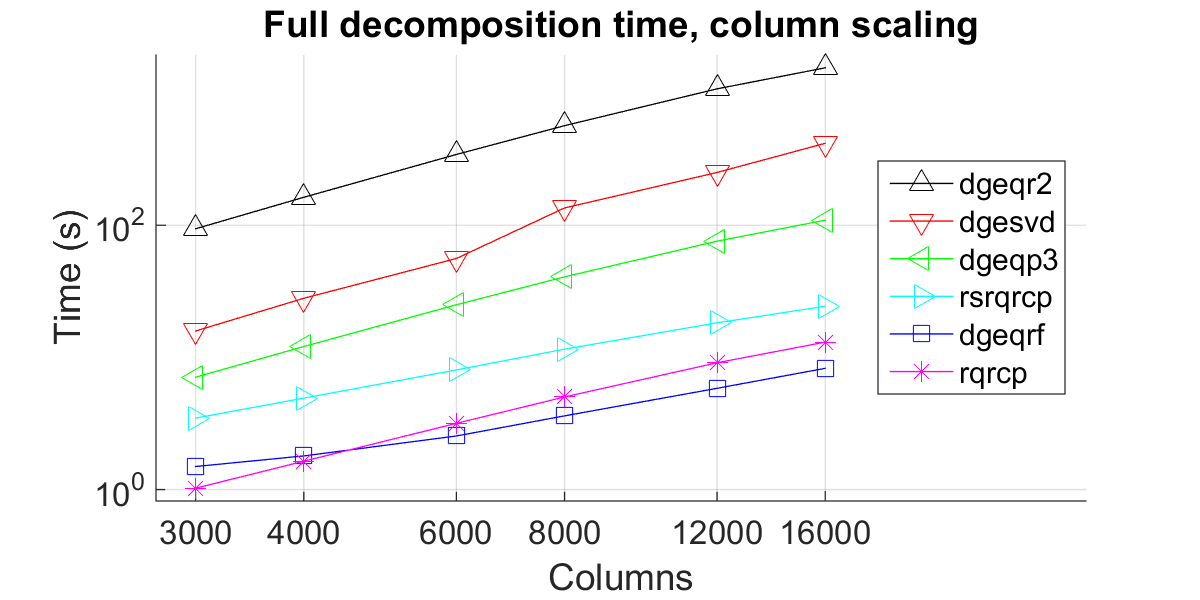}
    \caption{24 cores, $m=12000$, $n$ scaled.}
    \label{fig:fullCol}
\end{figure}

\begin{figure}[!htb]
\centering
\includegraphics[width=0.8\textwidth]{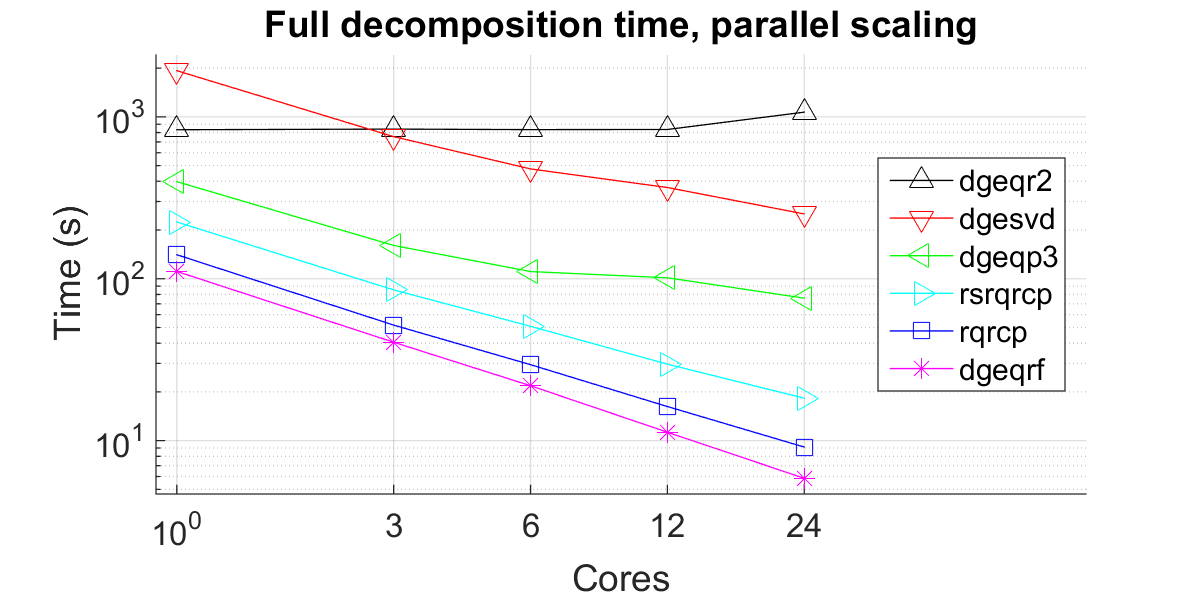}
\caption{Cores scaled, $m=12000$, $n=12000$.}
\label{fig:fullPara}
\end{figure}

\begin{figure}[!htb]
    \centering
    \includegraphics[width=0.8\textwidth]{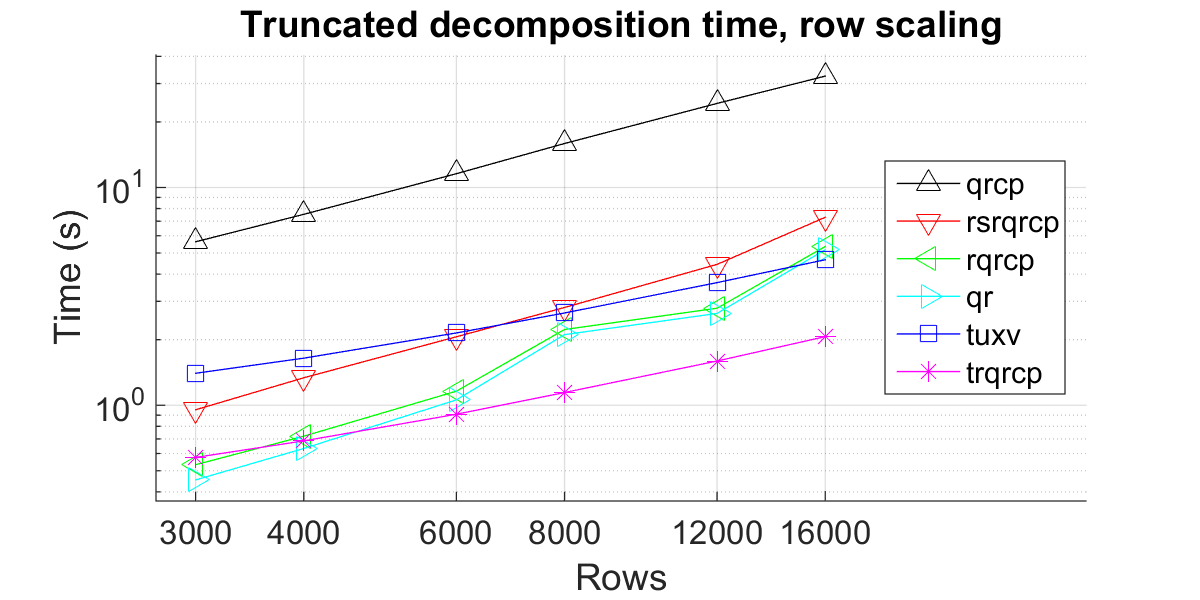}
    \caption{24 cores, $m$ scaled, $n=12000$, $k=1200$.}
\label{fig:truncRow}
\end{figure}

\begin{figure}[!htb]
    \centering
    \includegraphics[width=0.8\textwidth]{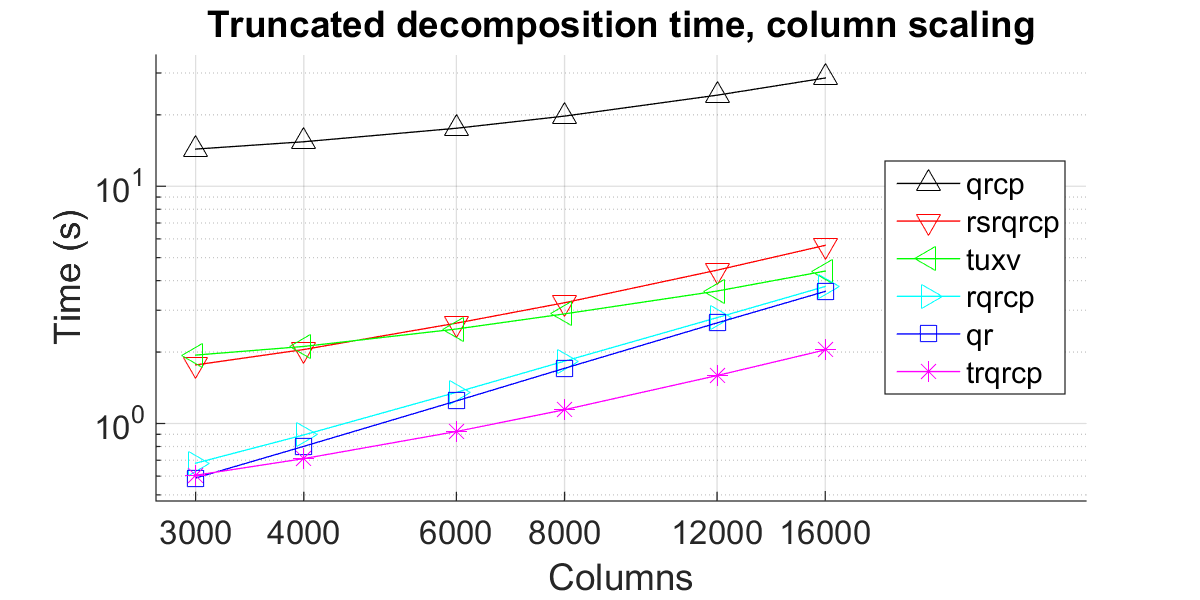}
    \caption{24 cores, $m=12000$, $n$ scaled, $k=1200$.}
    \label{fig:truncCol}
\end{figure}

\begin{figure}[!htb]
\centering
\includegraphics[width=0.8\textwidth]{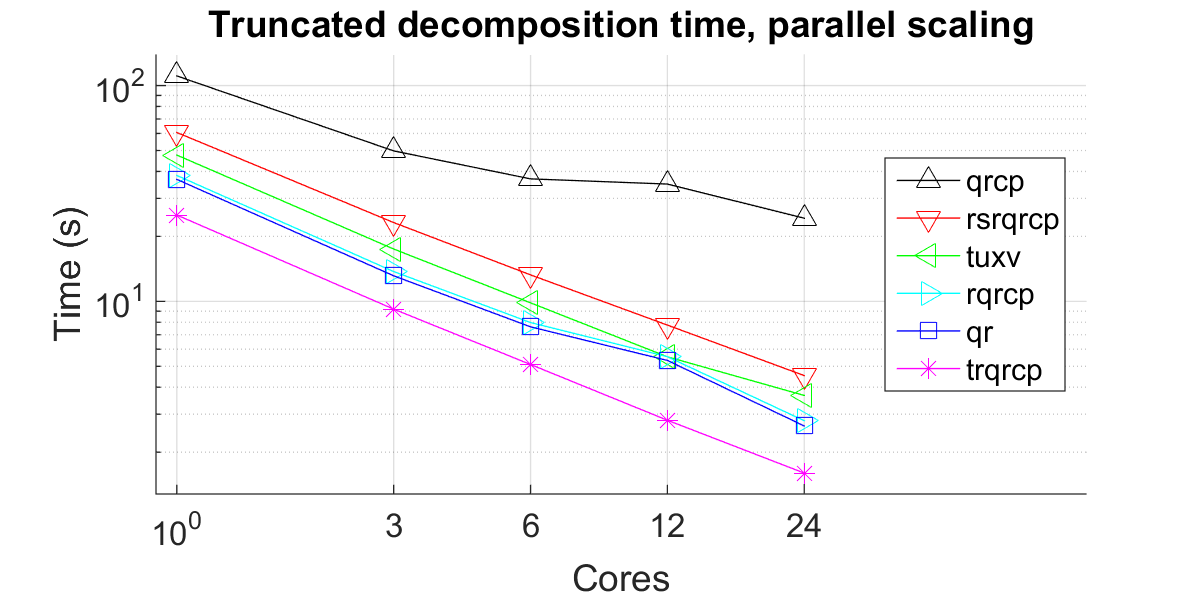}
\caption{Cores scaled, $m=12000$, $n=12000$, $k=1200$.}
\label{fig:truncPara}
\end{figure}

\begin{table}[!htb]
\footnotesize
\centering
\setlength{\tabcolsep}{5pt}
\caption{Full decomposition row scaling time (s)}
\begin{tabular}{|c|c|c|c|c|c|c|}\hline
	\textbf{Rows}	&\texttt{dgeqr2}	&\texttt{dgesvd}	&\texttt{dgeqp3}	&\texttt{rsrqrcp}	&\texttt{rqrcp}	&\texttt{dgeqrf}	\\ \hline\hline
	\textbf{ 3k}		&  85.73	&  14.57	&   6.96	&   1.50	&   0.86	&   0.50	\\ \hline
	\textbf{ 4k}		&  146.0	&  27.64	&  11.33	&   2.56	&   1.43	&   0.86	\\ \hline
	\textbf{ 6k}		&  314.1	&  56.02	&  22.61	&   5.35	&   2.85	&   1.77	\\ \hline
	\textbf{ 8k}		&  531.4	&  157.7	&  38.33	&   9.05	&   4.74	&   3.10	\\ \hline
	\textbf{12k}		&   1077	&  250.8	&  75.85	&  18.17	&   9.12	&   5.92	\\ \hline
	\textbf{16k}		&   1628	&  378.0	&  115.1	&  30.77	&  15.74	&   9.73	\\ \hline
\end{tabular}
\end{table}

\begin{table}[!htb]
\footnotesize
\centering
\setlength{\tabcolsep}{5pt}
\caption{Full decomposition column scaling time (s)}
\begin{tabular}{|c|c|c|c|c|c|c|}\hline
	\textbf{Columns}	&\texttt{dgeqr2}	&\texttt{dgesvd}	&\texttt{dgeqp3}	&\texttt{rsrqrcp}	&\texttt{dgeqrf}	&\texttt{rqrcp}	\\ \hline\hline
	\textbf{ 3k}		&  94.05	&  15.85	&   7.09	&   3.47	&   1.49	&   1.02	\\ \hline
	\textbf{ 4k}		&  162.8	&  28.02	&  12.09	&   4.92	&   1.80	&   1.64	\\ \hline
	\textbf{ 6k}		&  343.2	&  56.03	&  25.03	&   8.04	&   2.54	&   3.17	\\ \hline
	\textbf{ 8k}		&  567.0	&  135.2	&  40.70	&  11.55	&   3.61	&   5.02	\\ \hline
	\textbf{12k}		&   1077	&  250.9	&  75.78	&  18.27	&   5.84	&   9.11	\\ \hline
	\textbf{16k}		&   1554	&  417.0	&  109.2	&  24.41	&   8.27	&  13.05	\\ \hline
\end{tabular}
\end{table}

\begin{table}[!htb]
\footnotesize
\centering
\setlength{\tabcolsep}{5pt}
\caption{Full decomposition parallel scaling time (s)}
\begin{tabular}{|c|c|c|c|c|c|c|}\hline
	\textbf{Cores}	&\texttt{dgeqr2}	&\texttt{dgesvd}	&\texttt{dgeqp3}	&\texttt{rsrqrcp}	&\texttt{rqrcp}	&\texttt{dgeqrf}	\\ \hline\hline
	\textbf{    1}		&  832.8	&   1929	&  397.8	&  224.1	&  140.7	&  111.0	\\ \hline
	\textbf{    3}		&  840.3	&  754.8	&  160.6	&  85.33	&  51.71	&  40.52	\\ \hline
	\textbf{    6}		&  834.2	&  476.9	&  110.8	&  50.83	&  29.45	&  21.83	\\ \hline
	\textbf{   12}		&  835.6	&  366.0	&  101.3	&  29.68	&  16.24	&  11.27	\\ \hline
	\textbf{   24}		&   1069	&  251.4	&  75.89	&  18.28	&   9.12	&   5.85	\\ \hline
\end{tabular}
\end{table}

\begin{table}[!htb]
\footnotesize
\centering
\setlength{\tabcolsep}{5pt}
\caption{Truncated decomposition row scaling time (s)}
\begin{tabular}{|c|c|c|c|c|c|c|}\hline
	\textbf{Rows}	&\texttt{qrcp}	&\texttt{rsrqrcp}	&\texttt{rqrcp}	&\texttt{qr}	&\texttt{tuxv}	&\texttt{trqrcp}	\\ \hline\hline
	\textbf{ 3k}		&   5.63	&   0.95	&   0.53	&   0.45	&   1.40	&   0.58	\\ \hline
	\textbf{ 4k}		&   7.53	&   1.34	&   0.72	&   0.63	&   1.65	&   0.69	\\ \hline
	\textbf{ 6k}		&  11.57	&   2.06	&   1.16	&   1.06	&   2.15	&   0.91	\\ \hline
	\textbf{ 8k}		&  15.92	&   2.81	&   2.23	&   2.11	&   2.65	&   1.14	\\ \hline
	\textbf{12k}		&  24.32	&   4.43	&   2.79	&   2.64	&   3.66	&   1.60	\\ \hline
	\textbf{16k}		&  32.47	&   7.30	&   5.36	&   5.18	&   4.66	&   2.07	\\ \hline
\end{tabular}
\end{table}

\begin{table}[!htb]
\footnotesize
\centering
\setlength{\tabcolsep}{5pt}
\caption{Truncated decomposition column scaling time (s)}
\begin{tabular}{|c|c|c|c|c|c|c|}\hline
	\textbf{Columns}	&\texttt{qrcp}	&\texttt{rsrqrcp}	&\texttt{tuxv}	&\texttt{rqrcp}	&\texttt{qr}	&\texttt{trqrcp}	\\ \hline\hline
	\textbf{ 3k}		&  14.34	&   1.76	&   1.94	&   0.68	&   0.59	&   0.60	\\ \hline
	\textbf{ 4k}		&  15.40	&   2.05	&   2.11	&   0.90	&   0.80	&   0.71	\\ \hline
	\textbf{ 6k}		&  17.55	&   2.65	&   2.50	&   1.35	&   1.24	&   0.93	\\ \hline
	\textbf{ 8k}		&  19.75	&   3.23	&   2.90	&   1.83	&   1.71	&   1.14	\\ \hline
	\textbf{12k}		&  24.20	&   4.43	&   3.62	&   2.79	&   2.65	&   1.59	\\ \hline
	\textbf{16k}		&  28.59	&   5.63	&   4.39	&   3.78	&   3.60	&   2.04	\\ \hline
\end{tabular}
\end{table}

\begin{table}[!htb]
\footnotesize
\centering
\setlength{\tabcolsep}{5pt}
\caption{Truncated decomposition parallel scaling time (s)}
\begin{tabular}{|c|c|c|c|c|c|c|}\hline
	\textbf{Cores}	&\texttt{qrcp}	&\texttt{rsrqrcp}	&\texttt{tuxv}	&\texttt{rqrcp}	&\texttt{qr}	&\texttt{trqrcp}	\\ \hline\hline
	\textbf{    1}		&  110.8	&  60.60	&  47.63	&  38.31	&  36.75	&  25.12	\\ \hline
	\textbf{    3}		&  49.79	&  23.18	&  17.47	&  13.74	&  13.13	&   9.20	\\ \hline
	\textbf{    6}		&  36.94	&  13.26	&   9.84	&   7.97	&   7.62	&   5.10	\\ \hline
	\textbf{   12}		&  34.95	&   7.76	&   5.53	&   5.54	&   5.32	&   2.81	\\ \hline
	\textbf{   24}		&  24.28	&   4.53	&   3.66	&   2.80	&   2.65	&   1.60	\\ \hline
\end{tabular}
\end{table}

\begin{figure}[!htb]
	\centering
	\includegraphics[width=0.9\textwidth]{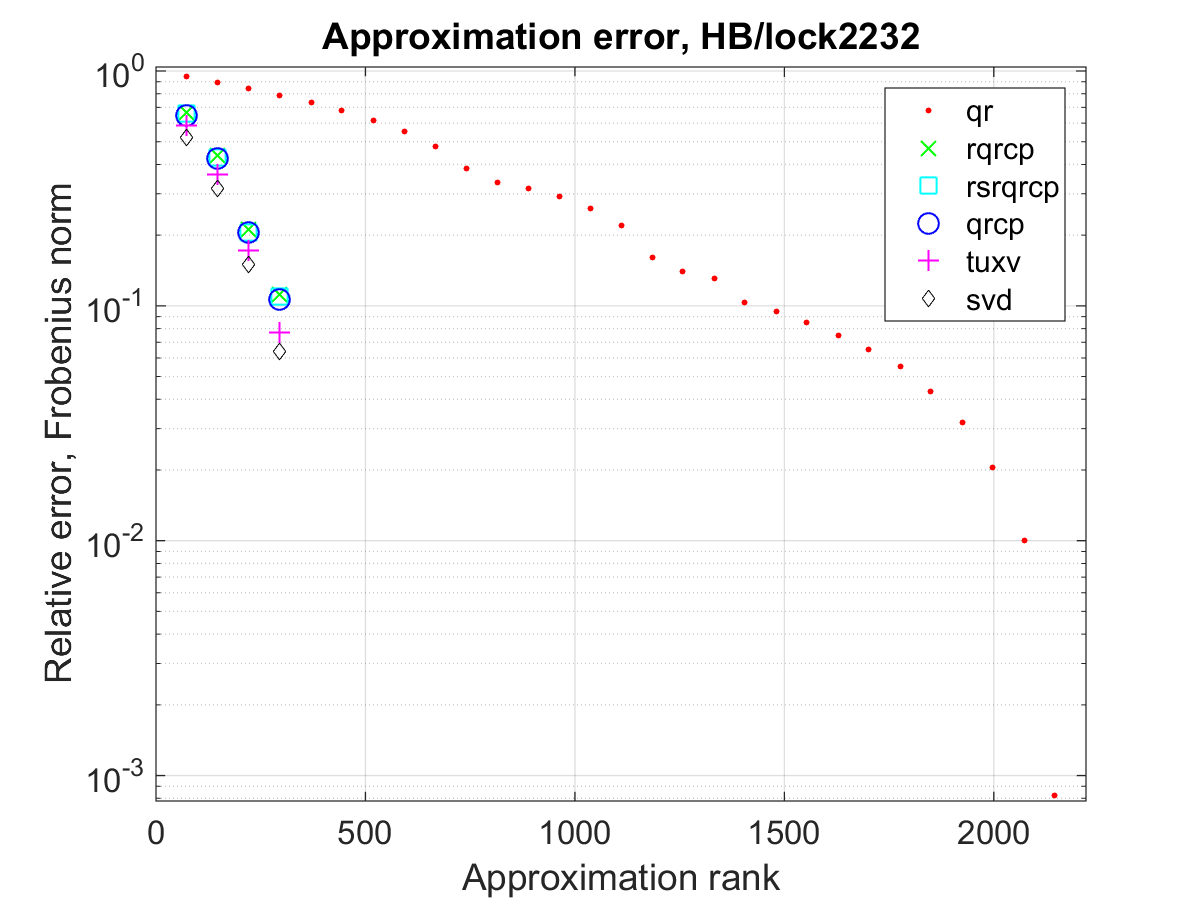}
	\caption{Matrix: HB/lock2232. $2232 \times 2232$.}
	\label{fig:lock2232}
\end{figure}

\begin{figure}[!htb]
	\centering
	\includegraphics[width=0.9\textwidth]{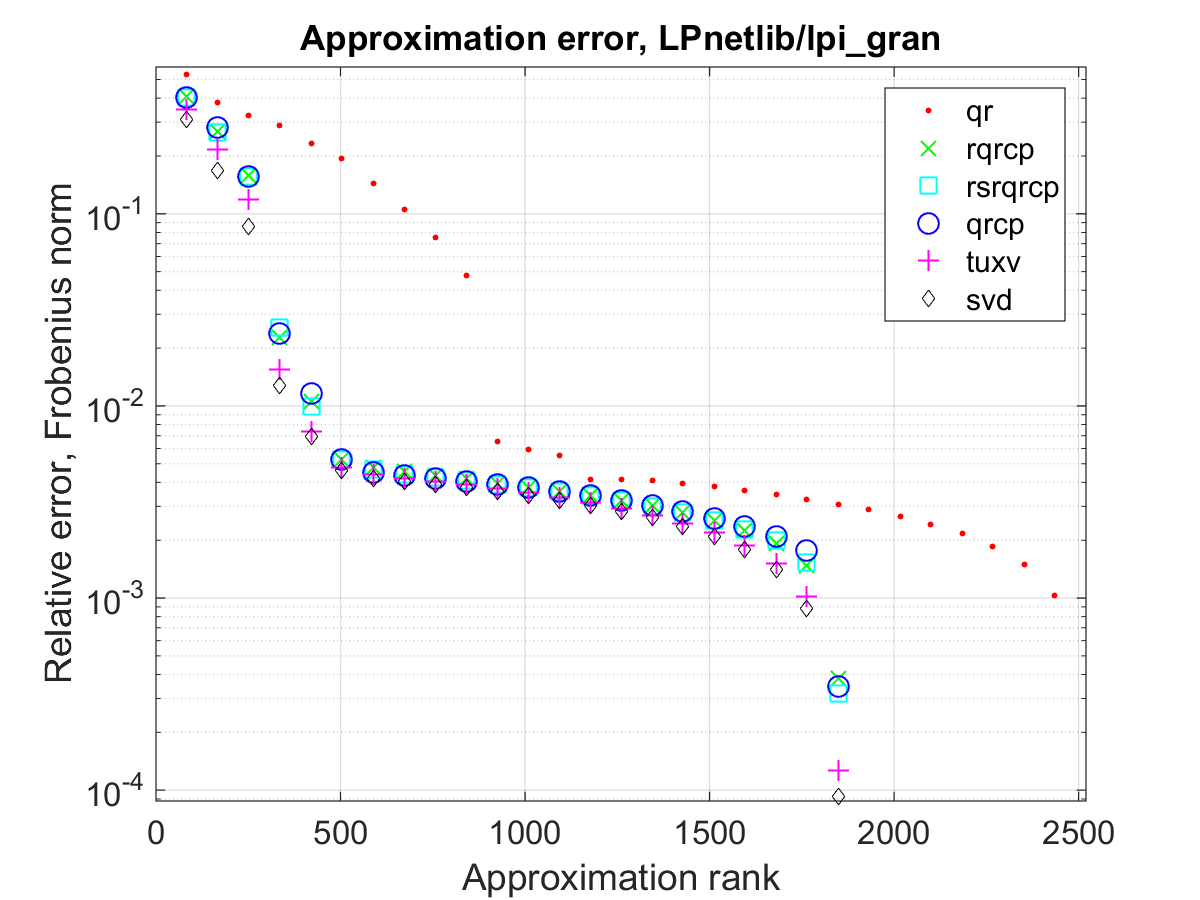}
	\caption{Matrix: LPnetlib/lpi\_gran. $2658 \times 2525$.}
	\label{fig:lpi_gran}
\end{figure}

\end{document}